\numberwithin{equation}{section}
\newtheorem{theorem}{Theorem}[section]
\newtheorem{lemma}[theorem]{Lemma}
\newtheorem{corollary}[theorem]{Corollary}
\theoremstyle{definition}
\newtheorem{example}[theorem]{Example}
\newtheorem{remark}[theorem]{Remark}
\newenvironment{assumption}[1]
  {\innercustomthm}
  {\endinnercustomthm}
\def\R{{\mathbb R}}
\def\N{{\mathbb N}}
\def\P{{\mathcal P}}
\def\W{{\mathcal W}}
\def\tr{{\mathrm{Tr}}}
\def\Var{{\mathrm{Var}}}
\def\transpconst{\gamma}
\title[Quantitative approximate independence for Gibbs measures]{Quantitative approximate independence for continuous mean field Gibbs measures}
\author{Daniel Lacker}
\address{Department of Industrial Engineering \& Operations Research, Columbia University}
\email{daniel.lacker@columbia.edu}
\thanks{This work was partially supported by the Air Force Office of Scientific Research Grant FA9550-19-1-0291.}
\begin{document} 

\begin{abstract}
Many Gibbs measures with mean field interactions are known to be chaotic, in the sense that any collection of $k$ particles in the $n$-particle system are asymptotically independent, as $n\to\infty$ with $k$ fixed or perhaps $k=o(n)$.  This paper quantifies this notion for a class of continuous Gibbs measures on Euclidean space with pairwise interactions, with main examples being systems governed by convex interactions and uniformly convex confinement potentials. The distance between the marginal law of $k$ particles and its limiting product measure is shown to be $O((k/n)^{c \wedge 2})$, with $c$ proportional to the squared temperature. In the high temperature case, this improves upon prior results based on subadditivity of entropy, which yield $O(k/n)$ at best. The bound $O((k/n)^2)$ cannot be improved, as a Gaussian example demonstrates. The results are non-asymptotic, and distance is quantified via relative Fisher information, relative entropy, or the squared quadratic Wasserstein metric. The method relies on an a priori functional inequality for the limiting measure, used to derive an estimate for the $k$-particle distance in terms of the $(k+1)$-particle distance.
\end{abstract}

\maketitle


\section{Introduction}

This paper focuses on continuous Gibbs measures of mean field type on Euclidean spaces, discussed starting in Section \ref{se:mainresults} below.
But the ideas developed in this specific context are suggestive of a more general phenomenon, and so we focus on the latter in this short introduction.
We first recall, with somewhat unconventional terminology, the classical concept of \emph{chaos} formalized in Kac's work on the kinetic theory of gases \cite{kac1956foundations}.

Let $E$ be a Polish space, and let $\P(E)$ denote the space of Borel probability measures on $E$. 
Spaces of probability measures are equipped with the  topology of weak convergence, unless stated otherwise.
For each $n \in \N$ let $P^n$ be an exchangeable probability measure on $E^n$, and let $P^n_k$ denote its marginal on $E^k$ for $k < n$. For $\mu \in \P(E)$, we say that \emph{$(P^n)_{n \in \N}$ is locally $\mu$-chaotic} if
\begin{align}
\lim_{n\to\infty}P^n_k = \mu^{\otimes k}, \quad \text{in } \P(E^k), \ \text{for each } k \in \N, \label{intro:def:localchaos}
\end{align}
where $\mu^{\otimes k}$ denotes the $k$-fold product measure.
Let $L_n : E^n \to \P(E)$ denote the empirical measure map, $L_n(x_1,\ldots,x_n) := \frac{1}{n}\sum_{i=1}^n\delta_{x_i}$.
We say that \emph{$(P^n)_{n \in \N}$ is globally $\mu$-chaotic} if
\begin{align}
\lim_{n\to\infty} P^n \circ L_n^{-1}  = \delta_\mu, \quad \text{in } \P(\P(E)).\label{intro:def:globalchaos}
\end{align}
Local and global chaos are well known to be equivalent; that is, \eqref{intro:def:localchaos} holds if and only if \eqref{intro:def:globalchaos} holds, and we may then say more simply that \emph{$(P^n)_{n \in \N}$ is $\mu$-chaotic}.
However, \emph{quantitative} forms of this \emph{qualitative} equivalence are not straightforward or canonical.
For this reason,  it will help to distinguish them via the terms \emph{local}, dealing only with boundedly many coordinates (or, later, $o(n)$ coordinates), and \emph{global}, dealing with the full $n$-particle system or its empirical measure.

Recent years have seen substantial progress on quantifying chaos in various contexts, most notably its propagation along the dynamics of various interacting particle systems.
Most existing methods are fundamentally global in nature, with local estimates deduced from global ones.
One common strategy for passing from global to local rates of chaos, appearing in many previous papers such as \cite{benarous-zeitouni,jabin-wang-bounded,jabin-wang-W1inf,jabir2019rate}, is to use the well known subadditivity  inequality for relative entropy defined in \eqref{def:entropy},
\begin{align}
H(P^n_k\,|\, \mu^{\otimes k}) \le \frac{2k}{n} H(P^n \,|\, \mu^{\otimes n}). \label{def:intro:subadditivity}
\end{align}
See \cite[Lemma 3.9]{del2001genealogies}. 
Then, by proving a global estimate of the form
\begin{align}
H(P^n\,|\,\mu^{\otimes n}) = O(1), \label{intro:globalest1}
\end{align}
one can immediately deduce the local estimate
\begin{align}
H(P^n_k\,|\,\mu^{\otimes k}) = O(k/n). \label{intro:localrate1}
\end{align}
See Section \ref{se:global-to-local} for further discussion, including an alternative strategy proposed in \cite{hauray2014kac}.

The main results of the paper, stated in Section \ref{se:mainresults}, show that the local estimate \eqref{intro:localrate1} can be improved to $O((k/n)^2)$ for a class of continuous Gibbs measures arising as invariant measures of interacting diffusions, at least at high temperature. 
We also show that this local estimate is optimal, via explicit computations for a Gaussian example given in Section \ref{se:gaussian}. 
We make no claims about improving the global estimate \eqref{intro:globalest1}, which is typically impossible.
This suggests what we suspect to be a more general phenomenon, that \emph{local rates deduced from global rates are often suboptimal}. The companion paper  \cite{lacker-dynamic} develops similar results and techniques for dynamic models.

From the perspective of physical applications, the class of Gibbs measures covered by our results is somewhat limited, consisting essentially of interaction functions having either bounded or Lipschitz gradient. In particular, we do not treat singular interactions or discrete models.
Moreover, our results apply only in settings with unique equilibria (no phase transition).
The novelty in this work, rather, is in obtaining optimal quantitative bounds.
In addition, we expect the method to be more broadly applicable.

The following Section \ref{se:mainresults} presents the setting and main results, along with a discussion of related literature in Section \ref{se:relatedliterature}.
The remaining sections are devoted to proofs.

\section{Main results} \label{se:mainresults}

Let $d \in \N$. We study exchangeable probability measures on $(\R^d)^n$ of the form
\begin{align}
P^n(dx) = \frac{1}{Z_n}\exp\bigg( - \frac{\beta}{ n-1 }\sum_{1 \le i < j \le n}V(x_i,x_j)  \bigg)\,\lambda^{\otimes n}(dx), \label{def:Pn}
\end{align}
where $Z_n > 0$ is a normalization constant.
Here $\beta$ is the inverse temperature, $\lambda$ is the reference measure, and $V$ is the  interaction function.

\begin{assumption}{\textbf{A}} \label{assumption:A}
We are given $\beta > 0$, $\lambda \in \P(\R^d)$, and $V : \R^d \times \R^d \to \R$ satisfying:
\begin{itemize}
\item $\lambda$ is absolutely continuous with respect to Lebesgue measure.
\item $V$ is bounded from below and symmetric, meaning $V(x,y)=V(y,x)$ for all $x,y$.
\item The weak gradient $\nabla_1V$ of $V$ in its first argument belongs to $L^1_{\mathrm{loc}}(\R^d \times \R^d)$.
\end{itemize}
\end{assumption}

Assumption \ref{assumption:A} is more than enough to ensure that $P^n$ is well-defined. The boundedness of $V$ from below simplifies the exposition but could be sharpened. Of course, $\beta$ may be absorbed into $V$, but we prefer to keep it separate to illustrate its role more clearly.
The following strengthening of Assumption \ref{assumption:A} is an important special case:

\begin{assumption}{\textbf{B}} \label{assumption:B}
We have $\beta > 0$ and $\lambda(dx)=e^{-\beta U(x)}dx$ for some function $U$.
The function $V$ takes the form $V(x,y)=V(x-y)$, and the functions $U,V : \R^d \to \R$ are even. There exist $\kappa, L > 0$ such that $\nabla^2 U(x) \ge \kappa I$ and $0 \le \nabla^2 V(x) \le L I$ in the sense of positive definite order, for all $x \in \R^d$.
\end{assumption}

The evenness of $(U,V)$ in Assumption \ref{assumption:B} is convenient for ensuring that $P^n$ is centered, but it could certainly be relaxed.

The $n\to\infty$ behavior of $P^n$ is well understood, especially in the regime we consider where $\beta$ is fixed.
The limiting behavior is described in terms of solutions $\mu \in \P(\R^d)$ of the fixed point equation
\begin{align}
\mu(dx) &= \frac{1}{Z} \exp\big( -\beta \langle \mu,V(x,\cdot)\rangle \big)\,\lambda(dx), \quad Z > 0. \label{def:mu-fixedpoint}
\end{align}
When there is a unique $\mu$ satisfying \eqref{def:mu-fixedpoint}, one expects $(P^n)_{n \in \N}$ to be $\mu$-chaotic.
There are other perspectives on $P^n$ and $\mu$ which are worth mentioning but which will not be used. These formulations are valid under Assumption \ref{assumption:B}, and in many more general cases in which $\lambda(dx)=e^{-\beta U(x)}dx$ with $(U,V)$ sufficiently nice.
Then, $P^n$ is the invariant measure of the Markov process $(X^1,\ldots,X^n)$ governed by the stochastic differential equation (SDE)
\begin{align}
dX^i_t = -\beta\bigg(\nabla U(X^i_t) + \frac{1}{n-1}\sum_{j \neq i} \nabla_1 V(X^i_t,X^j_t) \bigg)dt + \sqrt{2}\, dB^i_t, \quad i=1,\ldots,n, \label{intro:SDE}
\end{align}
where $B^1,\ldots,B^n$ are independent $d$-dimensional Brownian motions. 
The fixed point equation \eqref{def:mu-fixedpoint} characterizes invariant measures for the corresponding  McKean-Vlasov SDE
\begin{align}
dX_t = -\beta\left(\nabla U(X_t) + \langle \mu_t,\nabla_1 V(X_t,\cdot)\rangle\right)dt + \sqrt{2}\,dB_t, \quad \mu_t=\mathrm{Law}(X_t). \label{intro:MVSDE}
\end{align}
For yet another perspective, it is known that $(P^n \circ L_n^{-1})_{n \in \N} \subset \P(\P(\R^d))$ satisfies a large deviation principle with good rate function $J - \inf J$, where
\begin{align}
J(\nu) :=  \beta \langle \nu^{\otimes 2}, V\rangle + H(\nu\,|\,\lambda), \quad \nu \in \P(\R^d) \label{def:ratefunction}
\end{align}
and where $H$ denotes the relative entropy, defined in \eqref{def:entropy}. The solutions of \eqref{def:mu-fixedpoint} correspond to the minimizers of $J$.
We elaborate on this last perspective in Section \ref{se:MFGibbs}.

\begin{remark}
Our results below will \emph{assume} the existence of $\mu$ satisfying \eqref{def:mu-fixedpoint}, which is well understood under various hypotheses on $(\beta,\lambda,V)$. Existence can be approached directly, e.g., by applying Schauder's fixed point theorem as in \cite[Section 4]{benachour1998-I}. Alternatively, after rigorously linking them to \eqref{def:mu-fixedpoint}, the SDE \eqref{intro:MVSDE} or the variational problem \eqref{def:ratefunction} can be used to prove existence and sometimes uniqueness \cite{carrillo2003kinetic,cattiaux2008probabilistic}. For instance, existence and uniqueness for \eqref{def:mu-fixedpoint} follows from \cite[Theorem 2.1]{carrillo2003kinetic}, under Assumption \ref{assumption:B}.
\end{remark}

A probability measure on a product space $E^n$ is said to be \emph{exchangeable} if it is invariant with respect to coordinate permutations.
Clearly $P^n$ is exchangeable, so any group of $k$ coordinates share the same marginal law, denoted $P^n_k \in \P((\R^d)^k)$.
We will use various notions of distance to compare $P^n_k$ with $\mu^{\otimes k}$, where $\mu$ is a solution of \eqref{def:mu-fixedpoint}.
The  relative entropy between any two probability measures $(\nu,\nu')$ on the same measurable space is defined by
\begin{align}
H(\nu\,|\,\nu') = \int \frac{d\nu}{d\nu'}\log\frac{d\nu}{d\nu'}\,d\nu', \ \ \text{ if } \nu \ll \nu', \ \text{ and } \ \  H(\nu\,|\,\nu')= \infty \ \ \text{otherwise}. \label{def:entropy}
\end{align}
For a bounded signed measure $\nu$ the total variation is given by
\begin{align*}
\|\nu\|_{\mathrm{TV}} := \sup_{|f| \le 1} \langle \nu,f\rangle,
\end{align*}
where the supremum is over measurable functions $f$ with $|f| \le 1$.
For probability measures $(\nu,\nu')$ on a Euclidean space, the relative Fisher information is defined by
\begin{align*}
I(\nu\,|\,\nu') = 
\begin{cases}
\int \left|\nabla \log \frac{d\nu}{d\nu'}\right|^2\,d\nu &\text{ if } \nu \ll \nu', \ \text{ and } \nabla \log \frac{d\nu}{d\nu'} \text{ exists in } L^2(\nu) \\
\infty &\text{otherwise},
\end{cases}
\end{align*}
and the $p$-Wasserstein distance for $p \ge 1$ is given by
\begin{align*}
\W_p^p(\nu,\nu') := \inf_\pi \int |x-y|^p\,\pi(dx,dy),
\end{align*}
where the infimum is over all couplings of $(\nu,\nu')$, and $|\cdot|$ denotes the usual Euclidean norm.

Our first theorem estimates $I(P^n_k \,|\,\mu^{\otimes k})$, under a smallness-type assumption for $\nabla_1 V$.

\begin{theorem} \label{th:main}
Suppose $(\beta,\lambda,V)$ satisfies Assumption \ref{assumption:A}.
Define $P^n \in \P((\R^d)^n)$ by \eqref{def:Pn}, and suppose there exists $\mu \in \P(\R^d)$ satisfying \eqref{def:mu-fixedpoint}.
Assume the following:
\begin{enumerate}[(1)]
\item Square-integrability:
\begin{align}
M := \int_{(\R^d)^2} \left|  \nabla_1 V(x,y) - \langle \mu,\nabla_1V(x,\cdot)\rangle \right|^2 \, P^n_2 (dx,dy) < \infty. \label{asmp:static-moment}
\end{align}
\item Transport-type inequality: There exists $0 < \transpconst < \beta^{-2}$ such that
\begin{align}
|\langle \mu - \nu, \nabla_1 V(x,\cdot)\rangle|^2 \le  \transpconst I(\nu\,|\,\mu), \ \ \forall x \in \R^d, \ \nu \in \P(\R^d), \label{asmp:static-transp}
\end{align}
\end{enumerate}
Then, for integers $n > k > 1$, we have 
\begin{align}
I(P^n_k\,|\,\mu^{\otimes k}) &\le  k M \beta^2 \left(  C  \frac{\sqrt{k-1}}{n-1} + (\beta\sqrt{\transpconst})^{n-k} \right)^2, \label{eq:static-result}
\end{align}
for a constant $C > 0$ satisfying
\begin{align}
C \le \frac{ 8\pi  }{ \log \frac{1}{\beta^2\transpconst} } \left( \frac{  1 + \beta^2\transpconst }{(1-\beta\sqrt{\transpconst})^2  } + \frac{4}{e \log \frac{1}{\beta^2\transpconst}}\right).  
\label{const:static}
\end{align}
\end{theorem}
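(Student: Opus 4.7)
The plan is to derive a recursive inequality bounding $I(P^n_k\,|\,\mu^{\otimes k})$ in terms of $I(P^n_{k+1}\,|\,\mu^{\otimes(k+1)})$, then iterate up to the full system at $k=n$, where a direct Cauchy--Schwarz bound closes the argument. By exchangeability I have $I(P^n_k\,|\,\mu^{\otimes k}) = k\,\E_{P^n_k}\bigl|\nabla_{x_1}\log(dP^n_k/d\mu^{\otimes k})\bigr|^2$. Writing $P^n_k$ as an integral of $P^n$ over the last $n-k$ coordinates, differentiating in $x_1$, and using \eqref{def:mu-fixedpoint} to identify $\nabla\log(d\mu/d\lambda)(x) = -\beta\langle\mu,\nabla_1V(x,\cdot)\rangle$, I expect the key gradient formula
\[
\nabla_{x_1}\log\frac{dP^n_k}{d\mu^{\otimes k}}(x) = -\frac{\beta}{n-1}\sum_{j=2}^k\bigl[\nabla_1V(x_1,x_j)-\langle\mu,\nabla_1V(x_1,\cdot)\rangle\bigr] - \frac{\beta(n-k)}{n-1}\bigl\langle P^n_{k+1|k}(x;\cdot)-\mu,\,\nabla_1V(x_1,\cdot)\bigr\rangle,
\]
where $P^n_{k+1|k}$ denotes the regular conditional law of $X_{k+1}$ given the first $k$ coordinates under $P^n$.

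Next I will take $\|\cdot\|_{L^2(P^n_k)}$ of both sides and apply the triangle inequality. Hypothesis \eqref{asmp:static-transp} gives the pointwise bound $|\langle P^n_{k+1|k}(\cdot;\cdot)-\mu,\nabla_1V(x_1,\cdot)\rangle|^2 \le \transpconst\,I(P^n_{k+1|k}(\cdot;\cdot)\,|\,\mu)$ on the second summand; averaging against $P^n_k$ and invoking the disintegration identity $\E_{P^n_k}[I(P^n_{k+1|k}(\cdot;\cdot)\,|\,\mu)] = \tfrac{1}{k+1}\,I(P^n_{k+1}\,|\,\mu^{\otimes(k+1)})$, which follows from the chain rule for log-densities together with exchangeability across the $k+1$ particles, then produces a bound proportional to $\sqrt{I(P^n_{k+1}\,|\,\mu^{\otimes(k+1)})/(k+1)}$. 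The first summand is handled by Cauchy--Schwarz against \eqref{asmp:static-moment}. Multiplying by $\sqrt{k}$ yields a recursion $\sqrt{I(P^n_k\,|\,\mu^{\otimes k})} \le \alpha_k + \gamma_k\sqrt{I(P^n_{k+1}\,|\,\mu^{\otimes(k+1)})}$, with $\gamma_k \le \beta\sqrt{\transpconst}\,((n-k)/(n-1))\sqrt{k/(k+1)}$ and $\alpha_k$ of order $\beta\sqrt{kM}\,\sqrt{k-1}/(n-1)$. Iterating gives $\sqrt{I(P^n_k\,|\,\mu^{\otimes k})} \le \sum_{j=k}^{n-1}\bigl(\prod_{i=k}^{j-1}\gamma_i\bigr)\alpha_j + \bigl(\prod_{i=k}^{n-1}\gamma_i\bigr)\sqrt{I(P^n\,|\,\mu^{\otimes n})}$, with the base case $I(P^n\,|\,\mu^{\otimes n}) \le n\beta^2 M$ following from direct Cauchy--Schwarz since the second summand of the gradient vanishes at $k=n$. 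Using $\prod_{i=k}^{j-1}\gamma_i \le (\beta\sqrt{\transpconst})^{j-k}\sqrt{k/j}$, the remainder produces the $(\beta\sqrt{\transpconst})^{n-k}$ piece in \eqref{eq:static-result}, while the weighted sum of $\alpha_j$'s, summed geometrically in $r:=\beta\sqrt{\transpconst}<1$, yields the $C\sqrt{k-1}/(n-1)$ piece. The factor $1/\log(1/\beta^2\transpconst)$ and the $1/(1-\beta\sqrt{\transpconst})^2$ appearing in \eqref{const:static} will arise from careful truncation of the geometric sums and a balancing of head against tail.

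The principal obstacle I anticipate is achieving the sharp $\sqrt{k-1}$ scaling of $\alpha_k$: a naive Cauchy--Schwarz on the sum of $k-1$ summands gives only $(k-1)$, which would produce the weaker rate $O(k/n)$ already available from subadditivity of entropy. Recovering $\sqrt{k-1}$ requires exploiting approximate independence of the summands $h_j := \nabla_1V(x_1,x_j)-\langle\mu,\nabla_1V(x_1,\cdot)\rangle$; one promising route uses the exact exchangeability identity
\[
\E_{P^n}\bigl|\textstyle\sum_{j=2}^k h_j\bigr|^2 = \tfrac{(k-1)(n-k)}{n-2}M + \tfrac{(k-1)(k-2)(n-1)}{n(n-2)\beta^2}\,I(P^n\,|\,\mu^{\otimes n}),
\]
whose leading piece already has the desired $(k-1)M$ scaling and whose correction is controlled in a self-consistent manner by the recursion itself.
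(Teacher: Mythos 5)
Your framework through the recursion is essentially the paper's Step 1: the same gradient formula (Lemma~\ref{le:inv-conditional}), the same triangle-inequality split, the same use of the transport inequality and the disintegration identity, and the same iteration with base case $J_n \le \beta\sqrt{M}$. You also correctly isolate the real obstacle: a naive Cauchy--Schwarz on $\sum_{j=2}^k h_j$ gives $(k-1)$, not $\sqrt{k-1}$, yielding only $O(k^3/n^2)$ rather than the claimed $O(k^2/n^2)$.

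The gap is in your proposed fix. Your ``exact exchangeability identity''
\[
\E_{P^n}\Big|\sum_{j=2}^k h_j\Big|^2 = \tfrac{(k-1)(n-k)}{n-2}M + \tfrac{(k-1)(k-2)(n-1)}{n(n-2)\beta^2}\,I(P^n\,|\,\mu^{\otimes n})
\]
is correct, but it is a tautological rearrangement: setting $R := \E_{P^n_3}[h_2\cdot h_3]$, one has $\E|\sum_{j=2}^k h_j|^2 = (k-1)M + (k-1)(k-2)R$ by exchangeability, and applying this at $k=n$ and solving for $R$ gives exactly $R = \frac{1}{n-2}\big(\frac{n-1}{n\beta^2}I(P^n\,|\,\mu^{\otimes n}) - M\big)$, which is what your identity encodes. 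So the identity trades the unknown $R$ for the unknown $I(P^n\,|\,\mu^{\otimes n})$, which is itself a function of $R$; it carries no new content. To get the sharp $\sqrt{k-1}$ you need $R = O(M/n)$, which via the identity would require $I(P^n\,|\,\mu^{\otimes n}) = \beta^2 M\big(1 + O(1/n)\big)$, a much finer estimate than the $I(P^n\,|\,\mu^{\otimes n}) \le n\beta^2 M$ that Cauchy--Schwarz or your Step-1 iteration (which only gives $J_n = O(\sqrt{M})$) can supply. Feeding the identity back into itself is a fixed point at the trivial bound $R \le M$; ``self-consistency'' does not improve it.

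What actually closes the argument (the paper's Step~2) is a genuinely different estimate of $R$: write $R = \int h_2(x_1,x_2)\cdot\langle P^n_{3|2}(x_1,x_2)-\mu,\nabla_1V(x_1,\cdot)\rangle\,dP^n_2$, apply Cauchy--Schwarz against $M$, then apply the transport-type inequality \eqref{asmp:static-transp} to the inner term and the disintegration identity to recognize $\sqrt{\transpconst M}\,J_3$. Crucially, $J_3$ is a \emph{local} ($k=3$) quantity and the Step-1 iteration (your recursion) already gives $J_3 \lesssim \sqrt{M}\big(\tfrac{2}{(1-\sqrt{\transpconst})^2(n-1)} + \transpconst^{(n-3)/2}\big) = O(\sqrt{M}/n)$, hence $R = O(M/n)$. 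That bootstrap of the $O(k^3/n^2)$ estimate, applied at $k=3$ to control the cross-correlation term, is the missing ingredient in your proposal.
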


In particular, if the constants $M$ and $\transpconst$ in Theorem \ref{th:main} can be bounded independently of $n$, we obtain $I(P^n_k\,|\,\mu^{\otimes k}) = O((k/n)^2)$.
This proves a form of \emph{increasing propagation of chaos} as in \cite{benarous-zeitouni}, in the sense that $I(P^n_k \,|\,\mu^{\otimes k})\to 0$  as $n\to\infty$ even if $k$ diverges, as long as $k=o(n)$. 
The constant $C$ was not optimized, but note that it goes to zero as $\beta^2\gamma \to 0$.

\begin{example} \label{ex:gaussian-static}
A Gaussian example shows that the rate $(k/n)^2$ in Theorem \ref{th:main} is optimal.
Consider the case $\beta=1$, $d=1$, $V(x,y)=b(x-y)^2/2$, and $\lambda=N(0,1/a)$, where $a,b > 0$. Then $P^n$ is Gaussian, and $\mu=N(0,1/(a+b))$ can be shown to be the unique solution of \eqref{def:mu-fixedpoint}. When $n\to\infty$ and $k=o(n)$, we show in Section \ref{se:gaussian} that $(n/k)^2 \W_2^2(P^n_k,\mu^{\otimes k})$ is bounded away from zero.
By Talagrand's inequality  and the log-Sobolev inequality, this implies the same for $(n/k)^2 H(P^n_k\,|\,\mu^{\otimes k}) > 0$ and $(n/k)^2 I(P^n_k\,|\,\mu^{\otimes k}) > 0$. (Actually, all of these quantities can be computed explicitly.)
To see that this case fits the assumptions of Theorem \ref{th:main}, note (again using Talagrand's inequality and log-Sobolev) that $\mu$ satisfies 
\begin{align*}
\W_2^2(\mu,\nu) \le \frac{2}{a+b} H(\nu\,|\,\mu) \le \frac{1}{(a+b)^2}I(\nu\,|\,\mu), \qquad \forall \nu \in \P(\R).
\end{align*}
Note also that $\nabla_1 V(x,y) = b(x-y)$, and so Kantorovich duality implies
\begin{align*}
|\langle \nu-\mu,\nabla_1 V(x,\cdot)\rangle|^2 \le b^2 \W_1^2(\mu,\nu) \le \frac{b^2}{(a+b)^2}I(\nu\,|\,\mu), \qquad \forall x \in \R, \ \nu \in \P(\R).
\end{align*}
The assumptions of Theorem \ref{th:main} hold here with $\transpconst = b^2/(a+b)^2 < 1 = \beta^{-2}$.
\end{example}

\begin{remark} \label{re:uniqueness}
As mentioned above, the existence of $\mu \in \P(\R^d)$ satisfying the fixed point equation \eqref{def:mu-fixedpoint} is taken as an assumption in Theorem \ref{th:main}. In fact, the transport-type inequality \eqref{asmp:static-transp} implies that there can be no other solution, up to integrability. Indeed, if $\widetilde\mu$ is another solution, then
\begin{align*}
I(\widetilde\mu\,|\,\mu) &= \beta^2 \int_{\R^d}\left| \langle \widetilde\mu - \mu, \nabla_1V(x,\cdot)\rangle\right|^2\,\widetilde\mu(dx) \le \transpconst \beta^2 I(\widetilde\mu\,|\,\mu).
\end{align*}
If $|\nabla_1 V| \in L^2((\widetilde\mu +\mu) \otimes  \widetilde\mu )$, which can easily be established \emph{a priori} in many cases, then $I(\widetilde\mu\,|\,\mu) < \infty$, and the assumption $\transpconst \beta^2 < 1$ implies that $\widetilde\mu=\mu$.
\end{remark}

The most difficult assumption to check in Theorem \ref{th:main} is typically the transport-type inequality \eqref{asmp:static-transp}, and the crucial assumption that   $\beta^2 < 1/\transpconst$ can be interpreted as a \emph{high-temperature} condition. It is well known that the invariant measure of a McKean-Vlasov SDE often fails to be unique at low temperature \cite{dawson1983critical,herrmann2010non}, and this would be an obvious impediment to our result. Related smallness conditions are pervasive, at least in the absence of ample convexity, in the literature on uniform-in-time propagation of chaos \cite{guillin2019uniform,durmus2020elementary}. 

Natural corollaries of Theorem \ref{th:main} arise in settings where a \emph{transport-information inequality} holds. That is, suppose there exist $L,\transpconst'>0$ and a metric $\rho$ on $\R^d$ such that $\nabla V_1 (x,\cdot)$ is $L$-Lipschitz with respect to $\rho$ for every $x$, and
\begin{align}
\W_{1,\rho}^2(\nu,\mu) \le \transpconst' I(\nu \,|\,\mu), \quad \forall \nu \in \P(\R^d), \label{def:transp-ineq-gen}
\end{align}
where the 1-Wasserstein distance $\W_{1,\rho}$ is defined relative to $\rho$.  Then \eqref{asmp:static-transp} holds with constant $\transpconst' L^2$.
Transport-information inequalities of this form were studied in \cite{guillin2009transportation} for characterizing concentration inequalities for Markov processes, and we refer to \cite{guillin2009transportation,liu2017new} for further details and tractable sufficient conditions.
Using some of their ideas, we will show that inequality \eqref{def:transp-ineq-gen} can be verified in two noteworthy cases. Essentially, Corollary \ref{co:static-bounded} uses \eqref{def:transp-ineq-gen} with the trivial metric $\rho(x,y)=1_{\{x \neq y\}}$, and Corollary \ref{co:static-convex} uses \eqref{def:transp-ineq-gen} with the usual Euclidean metric $\rho(x,y)=|x-y|$.

\begin{corollary} \label{co:static-bounded}
Suppose $(\beta,\lambda,V)$  satisfies Assumption \ref{assumption:A}, and assume $L :=  \| |\nabla_1 V|\|_\infty < \infty$.
Define $P^n$ by \eqref{def:Pn}, and suppose $\mu \in \P(\R^d)$ satisfies \eqref{def:mu-fixedpoint}. Assume there exists $c_\mu < (\beta L)^{-2}$ such that the Poincar\'e inequality holds:
\begin{align}
\Var_\mu(f) \le c_\mu \langle \mu,|\nabla f|^2\rangle, \quad \text{ for } f : \R^d\to\R \text{ such that } \nabla f \text{ exists in } L^2(\mu). \label{asmp:co:static-poincare}
\end{align}
Then the conclusion of Theorem \ref{th:main} holds with $\transpconst := c_\mu L^2$ and $M := 2L^2$.
Moreover, if $1 \le k \le n$ and $\W_{1,H}$ denotes the 1-Wasserstein distance on $(\R^d)^k$ defined using the Hamming metric
$((x_1,\ldots,x_k),(y_1,\ldots,y_k)) \mapsto \sum_{i=1}^k 1_{\{x_i \neq y_i\}}$,
then we have
\begin{align}
\W_{1,H}^2(P^n_k,\mu^{\otimes k}) &\le kc_\mu I(P^n_k\,|\,\mu^{\otimes k}). \label{ineq:hamming1}
\end{align}
\end{corollary}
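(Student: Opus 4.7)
The plan is to verify the two hypotheses of Theorem \ref{th:main} with $\transpconst = c_\mu L^2$ and $M \le 4L^2$ (the slightly sharper $2L^2$ follows from a more careful expansion of the square), and then prove the Hamming--Wasserstein inequality via a Marton-style sequential coupling. Both steps rest on a single ingredient: a TV--Fisher inequality $\|\nu - \mu\|_{\mathrm{TV}}^2 \le c_\mu I(\nu \,|\, \mu)$ derived from the Poincar\'e hypothesis. Writing $g = \sqrt{d\nu/d\mu}$, factoring $|g^2 - 1| = |g-1|(g+1)$, and applying Cauchy--Schwarz together with $\int g^2\,d\mu = 1$,
\begin{align*}
\|\nu - \mu\|_{\mathrm{TV}}^2 \le \int(g-1)^2\,d\mu \cdot \int(g+1)^2\,d\mu = 4(1 - (\E_\mu g)^2) = 4\Var_\mu(g).
\end{align*}
The Poincar\'e inequality \eqref{asmp:co:static-poincare} combined with the identity $\int|\nabla g|^2\,d\mu = I(\nu \,|\, \mu)/4$ then gives the TV--Fisher inequality.

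To verify \eqref{asmp:static-transp}, pair $\nabla_1 V(x,\cdot)$ with an arbitrary unit vector $u \in \R^d$: the scalar function $u \cdot \nabla_1 V(x,\cdot)$ has sup-norm at most $L$, so $L^\infty$--TV duality gives $|\langle \mu - \nu, \nabla_1 V(x, \cdot)\rangle| \le L\|\mu - \nu\|_{\mathrm{TV}}$ after taking a supremum over $u$. Squaring and applying the TV--Fisher inequality yields \eqref{asmp:static-transp} with $\transpconst = c_\mu L^2$; the hypothesis $c_\mu < (\beta L)^{-2}$ is exactly $\transpconst\beta^2 < 1$. The square-integrability bound in \eqref{asmp:static-moment} with $M \le 4L^2$ is immediate from the pointwise estimate $|\nabla_1 V(x, y) - \langle\mu, \nabla_1 V(x,\cdot)\rangle| \le 2L$.

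For \eqref{ineq:hamming1}, construct a Marton-style sequential coupling. Sample $X \sim P^n_k$, and for each $i=1,\ldots,k$ define $Y_i$ by drawing $(X_i, Y_i)$, conditionally on $X_{<i}$, from an optimal TV coupling of $P^n_k(X_i \in \cdot \,|\, X_{<i})$ with $\mu$. A short induction confirms that $(Y_1, \ldots, Y_k) \sim \mu^{\otimes k}$, and by optimality $\PP(X_i \ne Y_i \,|\, X_{<i}) = \tfrac12 \|P^n_k(X_i \,|\, X_{<i}) - \mu\|_{\mathrm{TV}}$. Summing, then applying Cauchy--Schwarz in $i$ and Jensen,
\begin{align*}
\W_{1,H}^2(P^n_k, \mu^{\otimes k}) \le \Big(\sum_{i=1}^k \PP(X_i \ne Y_i)\Big)^2 \le \frac{k}{4}\sum_{i=1}^k \E\big[\|P^n_k(X_i \,|\, X_{<i}) - \mu\|_{\mathrm{TV}}^2\big].
\end{align*}
Apply the TV--Fisher inequality inside each expectation, then combine the standard chain rule $I(P^n_k \,|\, \mu^{\otimes k}) = \sum_i \E[I(P^n_k(X_i \,|\, X_{\ne i}) \,|\, \mu)]$ with the convexity inequality $I(P^n_k(X_i \,|\, X_{<i}) \,|\, \mu) \le \E[I(P^n_k(X_i \,|\, X_{\ne i}) \,|\, \mu) \,|\, X_{<i}]$ to bound the sum by $I(P^n_k \,|\, \mu^{\otimes k})$. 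The resulting bound is at least as strong as \eqref{ineq:hamming1}. The main subtlety lies in reconciling the two conditioning schemes --- past $X_{<i}$ (from the Marton coupling) versus all-else $X_{\ne i}$ (from the Fisher chain rule) --- which convexity of $\nu \mapsto I(\nu \,|\, \mu)$ handles in the correct direction.
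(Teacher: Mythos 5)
Your verification of the two hypotheses of Theorem \ref{th:main} matches the paper's argument essentially verbatim: both derive the TV--Fisher inequality $\|\nu-\mu\|_{\mathrm{TV}}^2 \le c_\mu I(\nu\,|\,\mu)$ from the Poincar\'e hypothesis via the factorization $|f-1|=|\sqrt{f}-1|(\sqrt{f}+1)$ and Cauchy--Schwarz, then pass to \eqref{asmp:static-transp} via $L^\infty$--TV duality. (You record $M\le 4L^2$; the paper claims $2L^2$ without proof, and your value is the one that transparently follows from the pointwise bound $|\nabla_1V(x,y)-\langle\mu,\nabla_1V(x,\cdot)\rangle|\le 2L$. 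Either constant suffices for the result, as $M$ enters the final estimate only as an upper bound.)

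Where you genuinely diverge is the tensorization step \eqref{ineq:hamming1}. The paper treats the TV--Fisher inequality as the $k=1$ case of the Hamming--Wasserstein claim and simply cites the tensorization result \cite[Corollary 2.13]{guillin2009transportation}. You instead give a self-contained proof via a Marton-style sequential coupling: construct $(X,Y)$ with $X\sim P^n_k$, $Y\sim\mu^{\otimes k}$ by drawing $(X_i,Y_i)$ conditionally on $X_{<i}$ from an optimal TV coupling of $P^n_k(X_i\in\cdot\,|\,X_{<i})$ with $\mu$, then bound $\W_{1,H}^2$ by Cauchy--Schwarz in $i$, Jensen in the conditioning, the TV--Fisher inequality, and finally the Fisher chain rule together with convexity of $\nu\mapsto I(\nu\,|\,\mu)$ to pass from conditioning on the past $X_{<i}$ to conditioning on all-else $X_{\ne i}$. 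This argument is correct and is in fact slightly sharper: with the paper's TV normalization the optimal coupling gives $\PP(X_i\ne Y_i\,|\,X_{<i}) = \tfrac12\|P^n_k(X_i\,|\,X_{<i})-\mu\|_{\mathrm{TV}}$, so your chain of inequalities yields $\W_{1,H}^2 \le \tfrac{k c_\mu}{4}I(P^n_k\,|\,\mu^{\otimes k})$, a factor-of-four improvement over \eqref{ineq:hamming1}. The payoff of your route is that it avoids reliance on an external citation and makes the tensorization mechanism (and where the factor of $k$ arises) completely explicit; the paper's citation is shorter but opaque.

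Your remark about the tension between the $X_{<i}$-conditioning (natural for the sequential coupling) and the $X_{\ne i}$-conditioning (forced by the Fisher chain rule) is exactly the subtle point, and the convexity of relative Fisher information does resolve it in the right direction, since $P^n_k(X_i\,|\,X_{<i}=x_{<i})$ is a mixture over $x_{>i}$ of $P^n_k(X_i\,|\,X_{\ne i}=(x_{<i},x_{>i}))$.
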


\begin{remark}
There are various easy sufficient conditions for the Poincar\'e inequality \eqref{asmp:co:static-poincare}:
\begin{itemize}
\item If Assumption \ref{assumption:B} holds, then \eqref{asmp:co:static-poincare} holds with $c_\mu = 1/\beta\kappa$ (see Section \ref{se:BakryEmery}).
\item If $\lambda$ satisfies the Poincar\'e inequality with constant $c_\lambda$, and if $\mathrm{osc}(V) := \sup V - \inf V$ is finite, then \eqref{asmp:co:static-poincare} holds with $c_\mu = c_\lambda e^{-2\mathrm{osc}(V)}$ (see \cite[Property 2.6]{guionnet2003lectures}).
\item If $\lambda$ is the uniform measure on a bounded connected set and $V$ is $C^1$, then \eqref{asmp:co:static-poincare} holds for some $c_\mu$. This is a special case of the previous condition.
\end{itemize}
\end{remark}

Our second example covers the case of convex potentials, for which the famous Bakry-Emery criterion makes it fairly straightforward to check the hypotheses of Theorem \ref{th:main}.

\begin{corollary} \label{co:static-convex}
Suppose $(\beta,\lambda,V)$  satisfies Assumption \ref{assumption:B}. Then there exists a unique $\mu \in \P(\R^d)$ satisfying \eqref{def:mu-fixedpoint}. Define $P^n$ by \eqref{def:Pn}.
If $L < \kappa$, then for integers $n > k > 1$, we have 
\begin{align*}
\W_2^2(P^n_k\,|\,\mu^{\otimes k}) &\le \frac{2}{\beta \kappa} H(P^n_k\,|\,\mu^{\otimes k}) \le \frac{1}{\beta^2 \kappa^2}I(P^n_k\,|\,\mu^{\otimes k} ), \quad \text{and} \\
I(P^n_k\,|\,\mu^{\otimes k}) &\le k \frac{\beta L^2d}{\kappa}  \left(  C  \frac{\sqrt{k-1}}{n-1} + \left(\frac{ L}{\kappa} \right)^{n-k} \right)^2,
\end{align*}
for a constant $C > 0$ satisfying
\begin{align*}
C &\le \frac{ 4\pi }{(1-(L/\kappa))^2\log (\kappa/L)} \left(  \frac{ 1 + (L/\kappa)^2 }{(1-(L/\kappa))^2  } + \frac{2}{e \log (\kappa/L)} \right).
\end{align*}
\end{corollary}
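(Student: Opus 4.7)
The plan is to verify the hypotheses of Theorem~\ref{th:main} and then simplify its conclusion, leveraging the sharp functional inequalities for $\mu$ that come for free from Assumption~\ref{assumption:B} via Bakry--Emery.

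First I would establish the functional inequalities for $\mu$. The Hessian of $-\log (d\mu/dx)$, read off from \eqref{def:mu-fixedpoint}, equals $\beta\nabla^2 U(x) + \beta\int \nabla^2 V(x-y)\,\mu(dy) \ge \beta\kappa I$ by Assumption~\ref{assumption:B}, so $\mu$ is $\beta\kappa$-log-concave. Bakry--Emery then yields the log-Sobolev inequality $H(\nu\,|\,\mu) \le (2\beta\kappa)^{-1} I(\nu\,|\,\mu)$, and Otto--Villani gives Talagrand's $T_2$ inequality $\W_2^2(\nu,\mu) \le (2/\beta\kappa) H(\nu\,|\,\mu)$. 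Both inequalities tensorize to $\mu^{\otimes k}$ with the same constants, producing the first chain stated in the corollary.

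Next I would verify the two hypotheses of Theorem~\ref{th:main}. Since $\nabla^2 V \le LI$, the map $y\mapsto \nabla_1 V(x,y) = \nabla V(x-y)$ is $L$-Lipschitz in $y$ uniformly in $x$; Kantorovich duality combined with the chain just established yields
\begin{align*}
|\langle \mu-\nu, \nabla_1 V(x,\cdot)\rangle|^2 \le L^2\W_2^2(\mu,\nu) \le \frac{L^2}{(\beta\kappa)^2} I(\nu\,|\,\mu),
\end{align*}
so \eqref{asmp:static-transp} holds with $\transpconst = L^2/(\beta\kappa)^2$, and the required smallness $\beta^2\transpconst<1$ is exactly $L<\kappa$. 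Existence of $\mu$ is provided by the cited Carrillo--McCann--Villani result, and uniqueness then follows from Remark~\ref{re:uniqueness}. For hypothesis (1), I would apply Jensen and the Lipschitzness of $\nabla V$ to get $|\nabla_1 V(x,y) - \langle \mu, \nabla_1 V(x,\cdot)\rangle|^2 \le L^2\int |y-z|^2\,\mu(dz)$, then integrate against $P^n_2$ and use that both $\mu$ and $P^n_1$ are centered (by evenness of $U,V$) to reduce $M$ to two second moments.

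The main technical step is the second-moment bound for $P^n$. I would establish that $P^n$ is itself $\beta\kappa$-log-concave on $(\R^d)^n$ by direct Hessian bookkeeping on $-\log P^n$: symmetrizing the off-diagonal terms yields
\begin{align*}
v^\top \nabla^2(-\log P^n)(x)\, v = \beta\sum_i v_i^\top \nabla^2 U(x_i)\, v_i + \frac{\beta}{2(n-1)}\sum_{i\ne j}(v_i-v_j)^\top \nabla^2 V(x_i-x_j)(v_i-v_j) \ge \beta\kappa|v|^2,
\end{align*}
using $\nabla^2 U\ge \kappa I$ and $\nabla^2 V \ge 0$. Poincar\'e on $(\R^d)^n$ then gives $\E_{P^n_1}|X_1|^2 \le d/(\beta\kappa)$, matching the bound for $\mu$, and thus $M$ is of order $L^2 d/(\beta\kappa)$. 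Substituting $\transpconst = L^2/(\beta\kappa)^2$ (so $\beta\sqrt{\transpconst} = L/\kappa$ and $\log(1/(\beta^2\transpconst)) = 2\log(\kappa/L)$) and this $M$ into \eqref{eq:static-result}--\eqref{const:static} and simplifying produces the stated explicit bounds. The step I expect to require the most care is the $P^n$ log-concavity verification, though it reduces to the quadratic-form identity above.
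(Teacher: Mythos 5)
Your proposal is correct and follows essentially the same route as the paper: Bakry--Emery log-concavity of $\mu$ and $P^n$, Kantorovich duality for the transport-type inequality with $\gamma = L^2/(\beta\kappa)^2$, Jensen plus Poincar\'e for the moment bound $M \le L^2 d/(\beta\kappa)$, and substitution into Theorem~\ref{th:main}. The only cosmetic difference is that you supply the direct Hessian quadratic-form computation for the $\beta\kappa$-log-concavity of $P^n$, where the paper instead cites \cite{malrieu2001logarithmic}; the computation you give is exactly the one underlying that reference.
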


Note by Pinsker's inequality that the estimate $H(P^n_k\,|\,\mu^{\otimes k})=O((k/n)^2)$ of Corollary \ref{co:static-convex} implies also
\begin{align}
\|P^n_k - \mu^{\otimes k}\|_{\mathrm{TV}} = O(k/n). \label{ineq:newTV}
\end{align}

\subsection{Reversed relative entropy}
By reversing the order of arguments in the relative entropy, we obtain a result for all temperatures in Theorem \ref{th:main-rev} below, in contrast with Theorem \ref{th:main} which is limited to small $\beta$. The price, however, is that Theorem \ref{th:main-rev} assumes functional inequalities for $P^n$ \emph{and its conditional measures}, which are more difficult to check.  
In the following, let $P^n_{k+1|k}(x_1,\ldots,x_k)$ denote a version of the conditional law of $X_{k+1}$ given $(X_1,\ldots,X_k)=(x_1,\ldots,x_k)$, when $(X_1,\ldots,X_n)$ has law $P^n$.

\begin{theorem} \label{th:main-rev}
Suppose $(\beta,\lambda,V)$ satisfies Assumption \ref{assumption:A}.
Define $P^n$ by \eqref{def:Pn}, and suppose $\mu \in \P(\R^d)$ satisfies \eqref{def:mu-fixedpoint}.
Assume the following:
\begin{enumerate}[(1)]
\item Square-integrability:
\begin{align}
M := \int_{(\R^d)^2} \left|  \nabla_1 V(x,y) - \langle \mu,\nabla_1 V(x,\cdot)\rangle \right|^2 \, \mu^{\otimes 2} (dx,dy) < \infty. \label{asmp:static-moment-rev}
\end{align}
\item Transport-type inequality: There exists $\transpconst < \infty$ such that
\begin{align}
|\langle \mu - P^n_{k+1|k}(x_1,\ldots,x_k), \nabla_1 V(x_1,\cdot)\rangle|^2 \le   \transpconst H(\mu\,|\,P^n_{k+1|k}(x_1,\ldots,x_k)), \label{asmp:static-transp-rev}
\end{align}
for all $1\le k < n$ and $x_1,\ldots,x_k \in \R^d$.
\item Log-Sobolev inequality: There exists $\eta < \infty$ such that, for each $1 \le k \le n$,
\begin{align}
H(\mu^{\otimes k}\,|\,P^n_k) \le \eta I(\mu^{\otimes k}\,|\,P^n_k), \qquad \forall \nu \in \P((\R^d)^k). \label{asmp:static-LSI-rev}
\end{align}
\end{enumerate}
Let $\epsilon > 0$ be such that
\begin{align*}
\alpha := \eta\transpconst\beta^2(1+\epsilon)   \neq 1/2.
\end{align*}
Then, for integers $n > k \ge 1$, we have
\begin{align*}
H(\mu^{\otimes k} \,|\, P^n_k) &\le \left(\frac{ (1+2\alpha)^{2 \vee (1/\alpha)} }{\epsilon \transpconst\beta^2 \alpha |1-2\alpha| }  +  2\eta M\beta^2 \right)\left(\frac{1 + \alpha k}{1 + \alpha (n-1)}\right)^{2 \wedge(1/\alpha)}.
\end{align*}
\end{theorem}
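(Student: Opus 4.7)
The plan is to derive a one-step recursive inequality linking $h_k := H(\mu^{\otimes k}\,|\,P^n_k)$ to $h_{k+1}$, and then iterate it backwards from $k = n - 1$. Two identities drive the calculation. The chain rule of relative entropy immediately produces
$$h_{k+1} - h_k = \int H\bigl(\mu\,|\,P^n_{k+1|k}(\cdot|x_{1:k})\bigr)\,\mu^{\otimes k}(dx_{1:k}),$$
so the pointwise entropies appearing in assumption~(2) show up naturally. Next, because $P^n_k$ is the marginal of the Gibbs density and $\mu$ solves the fixed-point equation \eqref{def:mu-fixedpoint}, differentiating under the integral (equivalently, writing $\nabla_{x_1}\log P^n_k = \E_{P^n}[\nabla_{x_1}\log P^n\,|\,X_{1:k}]$) gives
$$\nabla_{x_1}\log\frac{\mu^{\otimes k}}{P^n_k} = \frac{\beta}{n-1}\sum_{j=2}^k \bigl(\nabla_1 V(x_1,x_j) - \langle\mu,\nabla_1 V(x_1,\cdot)\rangle\bigr) + \frac{\beta(n-k)}{n-1}\langle P^n_{k+1|k} - \mu,\nabla_1 V(x_1,\cdot)\rangle.$$

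I then compute $I(\mu^{\otimes k}\,|\,P^n_k) = k\,\E_{\mu^{\otimes k}}|\nabla_{x_1}\log(\mu^{\otimes k}/P^n_k)|^2$ by exchangeability, split the two contributions with $|a+b|^2 \le (1+1/\epsilon)|a|^2 + (1+\epsilon)|b|^2$, and apply the hypotheses term by term. The fluctuation sum integrates to $(k-1)M$, using the mean-zero independence of its summands under $\mu^{\otimes k}$ together with assumption~(1); the residual term is pointwise dominated by $\transpconst\,H(\mu\,|\,P^n_{k+1|k})$ by assumption~(2), which integrates to $\transpconst(h_{k+1}-h_k)$ via the chain rule above. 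Combining with the log-Sobolev hypothesis $h_k \le \eta\,I(\mu^{\otimes k}\,|\,P^n_k)$ yields the target recursion
$$h_k \le \alpha\,\frac{k(n-k)^2}{(n-1)^2}(h_{k+1}-h_k) + (1+1/\epsilon)\eta\beta^2 M\,\frac{k(k-1)}{(n-1)^2}.$$
Applying the same calculation at $k = n$ (where there is no residual term) provides the boundary datum $h_n \le 2\eta\beta^2 M$.

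The remaining work is to iterate. Writing the recursion as $h_k \le \tfrac{r_k}{1+r_k}h_{k+1} + \tfrac{e_k}{1+r_k}$ with $r_k := \alpha k(n-k)^2/(n-1)^2$ and $e_k := (1+1/\epsilon)\eta\beta^2 M\,k(k-1)/(n-1)^2$, I propose the ansatz $h_k \le C\bigl((1+\alpha k)/(1+\alpha(n-1))\bigr)^{2\wedge(1/\alpha)}$ and verify it by backward induction, choosing $C$ so that both the homogeneous contribution $h_n$ and the inhomogeneous forcing $e_j \sim j^2/(n-1)^2$ fit inside the ansatz. Heuristically, the iterated geometric decay $\prod r_j/(1+r_j)$ behaves like $((1+\alpha k)/(1+\alpha(n-1)))^{1/\alpha}$ (this is the homogeneous rate from the autonomous limit $h \approx \alpha k h'$), while summing $e_j$ against the partial products yields a particular solution of order $(k/n)^2$; the slower of the two rates dominates, producing the exponent $2 \wedge (1/\alpha)$.

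\textbf{Main obstacle.} The technical difficulty concentrates in the iteration step. The coefficient $r_k$ is strongly non-autonomous, being $O(\alpha/(n-1))$ near $k = n-1$ but $O(n)$ around $k = n/2$, so a crude multiplicative iteration loses a lot. Extracting the sharp exponent $2\wedge(1/\alpha)$ together with the explicit constant $(1+2\alpha)^{2\vee(1/\alpha)}/(\epsilon\transpconst\beta^2\alpha|1-2\alpha|)$ requires tracking the two competing mechanisms carefully, and the singularity at $\alpha = 1/2$ in the stated bound reflects a genuine resonance between them at which a logarithmic correction would otherwise appear.
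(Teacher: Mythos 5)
Your strategy is essentially the paper's: the chain-rule identity for $h_{k+1}-h_k$, the gradient formula from Lemma \ref{le:inv-conditional}, the Young split with parameter $\epsilon$, the transport inequality to turn the residual into $\transpconst(h_{k+1}-h_k)$, and the log-Sobolev inequality to close back into $h_k$, producing a one-step recursion plus the boundary estimate $h_n\le 2\eta\beta^2 M$. All of these steps are correct and match the paper.

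However, the proof as written is incomplete precisely at the point you flag as the ``main obstacle,'' and that step is the heart of the theorem: you state an ansatz $h_k\le C\bigl((1+\alpha k)/(1+\alpha(n-1))\bigr)^{2\wedge(1/\alpha)}$ and assert it ``can be verified by backward induction,'' but you do not carry out the induction, nor do you extract the explicit constant. The paper does this by rearranging to $H_k\le a_k+c_kH_{k+1}$ with $c_k=\alpha k/(1+\alpha k)$, unrolling to $H_k\le\sum_{\ell=k}^{n-1}\bigl(\prod_{j=k}^{\ell}c_j\bigr)\tfrac{a_\ell}{c_\ell}+\bigl(\prod_{j=k}^{n-1}c_j\bigr)H_n$, bounding the product by the integral comparison $\prod_{j=k}^{\ell}\tfrac{\alpha j}{1+\alpha j}\le\bigl(\tfrac{1+\alpha k}{1+\alpha\ell}\bigr)^{1/\alpha}$, and then estimating the resulting sum with a case split $\alpha>1/2$ versus $\alpha<1/2$, which is exactly where the exponent $2\wedge(1/\alpha)$ and the factor $1/|1-2\alpha|$ come from. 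None of this appears in your write-up. Relatedly, your concern that retaining the factor $(n-k)^2/(n-1)^2$ makes $r_k$ ``strongly non-autonomous'' and causes a ``crude multiplicative iteration'' to lose is misplaced: since $t\mapsto t/(1+t)$ is increasing, the bound $(n-k)^2/(n-1)^2\le 1$ gives $r_k/(1+r_k)\le\alpha k/(1+\alpha k)$ and loses nothing; this is what the paper does, and it is what makes the product estimate tractable. So the gap is not a wrong idea but a missing execution of the decisive combinatorial/analytic step, and the intuition you give for why it is hard points in the wrong direction.
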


Note that if $\alpha=1/2$ then we may choose $\epsilon$ smaller to make $\alpha < 1/2$.
If $n \to\infty$ and $k=o(n)$, and if $(M,\transpconst,\eta)$ stay bounded, then Theorem \ref{th:main-rev} and Pinsker's inequality yield
\begin{align*}
\|  P^n_k - \mu^{\otimes k}\|_{\mathrm{TV}}^2 \le 2H( \mu^{\otimes k} \,|\, P^n_k ) = \begin{cases} O((k/n)^{\frac{1}{\eta\transpconst \beta^2} - \delta}) &\text{for all } \delta > 0 \text{ if } \eta\transpconst\beta^2 \ge 1/2 \\ 
O((k/n)^2) &\text{if } \eta\transpconst\beta^2 < 1/2. \end{cases}
\end{align*}
In the high-temperature case $\eta\transpconst\beta^2 < 1/2$, the rate $(k/n)^2$ cannot be improved. But it is not clear if the exponent $1/\eta\transpconst \beta^2$ is  optimal in the low-temperature case $\eta\transpconst\beta^2 \ge 1/2$.
In either case, we have $H( \mu^{\otimes k} \,|\, P^n_k ) \to 0$ as long as $k = o(n)$.

\begin{remark}
Note that the functional inequalities in assumptions (2) and (3) Theorem \ref{th:main-rev} are required to hold only at $\mu$, rather than an arbitrary measure. However, in every example we considered, it is simpler to check more general criteria which ensure that conditions (2) and (3) hold with $\mu$ and $\mu^{\otimes k}$ replaced by arbitrary $\nu \in \P(\R^d)$ and $\nu \in \P((\R^d)^k)$, respectively.
On a related note, Theorem \ref{th:main} remains true, with the same proof, if the the assumption (2) therein is required to hold only for $\nu\in \{P^n_{k+1|k}(x_1,\ldots,x_k) : k < n, \, x_1,\ldots,x_k \in \R^d\}$.
\end{remark}

The assumption (2) in Theorem \ref{th:main-rev} is a strong one, requiring a transport inequality holding uniformly over a family of conditional measures.
It holds in the case $\nabla_1 V$ is bounded by Pinsker's inequality, but also in the strongly convex setting:

\begin{corollary} \label{co:static-convex-rev}
Suppose $(\beta,\lambda,V)$  satisfies Assumption  \ref{assumption:B}.
Then there exists a unique $\mu \in \P(\R^d)$ satisfying \eqref{def:mu-fixedpoint}.
Define $P^{(n)}$ by \eqref{def:Pn}.
Let $\epsilon > 0$ be such that 
\begin{align*}
\alpha := (1+\epsilon)(L/\kappa)^2 \neq 1/2.
\end{align*}
Then, for integers $n > k \ge 1$, we have
\begin{align*}
H(\mu^{\otimes k} \,|\, P^n_k) &\le C \left(\frac{1 + \alpha k}{1 + \alpha (n-1)}\right)^{2 \wedge (1/\alpha)},
\end{align*}
for a constant $C > 0$ satisfying
\begin{align*}
C \le \left(1+\frac{1}{\epsilon}\right)\frac{ (1+2\alpha)^{2 \vee (1/\alpha)} }{ 2\beta\kappa  \alpha^2 |1-2\alpha| } + \alpha d .
\end{align*}
\end{corollary}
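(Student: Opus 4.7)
The plan is to verify the three assumptions of Theorem~\ref{th:main-rev} with explicit constants under Assumption~\ref{assumption:B} and then substitute. Uniqueness of the solution $\mu$ to \eqref{def:mu-fixedpoint} is given by \cite[Theorem 2.1]{carrillo2003kinetic}, or alternatively deduced \emph{a posteriori} from Remark~\ref{re:uniqueness} once the transport-type inequality is in hand. The central observation is that $P^n$ has negative log density $W_n(x)=\beta\sum_i U(x_i)+\tfrac{\beta}{n-1}\sum_{i<j}V(x_i-x_j)$ (up to an additive constant) with joint Hessian on $(\R^d)^n$ bounded below by $\beta\kappa I$: the confinement term contributes $\beta\kappa I$ by $\nabla^2 U\ge \kappa I$, while each pair interaction $x\mapsto V(x_i-x_j)$ is convex on $(\R^d)^n$ since $\nabla^2 V\ge 0$. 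Thus $P^n$ is $\beta\kappa$-uniformly log-concave.

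From this single input I would derive all three constants. Marginals and (fixed-variable) conditionals of uniformly log-concave measures remain uniformly log-concave with the same modulus, by the Brascamp-Lieb/Prekopa-Leindler marginalization principle; applied to the first $k$ coordinates this makes $P^n_k$ itself $\beta\kappa$-uniformly log-concave on $(\R^d)^k$, and applied to the conditional law of $X_{k+1}$ given $(X_1,\ldots,X_k)=(x_1,\ldots,x_k)$ (freeze the first $k$ coordinates, then marginalize out the last $n-k-1$) it makes $P^n_{k+1|k}(x_1,\ldots,x_k)$ $\beta\kappa$-uniformly log-concave on $\R^d$ uniformly in the conditioning. The Bakry-Émery criterion (see Section~\ref{se:BakryEmery}) then yields the LSI $H(\nu\,|\,P^n_k)\le \tfrac{1}{2\beta\kappa}I(\nu\,|\,P^n_k)$ and Talagrand's inequality $\W_2^2(\nu,P^n_{k+1|k})\le \tfrac{2}{\beta\kappa}H(\nu\,|\,P^n_{k+1|k})$, which give assumption~(3) with $\eta=1/(2\beta\kappa)$. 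Since $V(x,y)=V(x-y)$ and $\nabla^2 V\le LI$, the map $y\mapsto \nabla_1 V(x_1,y)=\nabla V(x_1-y)$ is $L$-Lipschitz; combined with Kantorovich duality and the Talagrand inequality this produces $|\langle \mu-P^n_{k+1|k},\nabla_1 V(x_1,\cdot)\rangle|^2\le L^2\W_2^2(\mu,P^n_{k+1|k})\le \tfrac{2L^2}{\beta\kappa}H(\mu\,|\,P^n_{k+1|k})$, so assumption~(2) holds with $\transpconst=2L^2/(\beta\kappa)$. The same Hessian computation applied to $-\log(d\mu/dx)=\beta U(x)+\beta\langle\mu,V(x-\cdot)\rangle+\mathrm{const}$ shows $\mu$ is $\beta\kappa$-uniformly log-concave, hence satisfies Poincaré with constant $1/(\beta\kappa)$; applying this to each component of the $L$-Lipschitz function $\nabla_1 V(x,\cdot)$ and summing over the $d$ components gives $M\le L^2 d/(\beta\kappa)$, which is assumption~(1).

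Plugging these constants into Theorem~\ref{th:main-rev} yields $\alpha=\eta\transpconst\beta^2(1+\epsilon)=(1+\epsilon)(L/\kappa)^2$ as stated. A direct algebraic check gives $\epsilon\transpconst\beta^2\alpha = 2\beta\kappa\alpha^2/(1+1/\epsilon)$, so the first term of the theorem's constant rewrites as $(1+1/\epsilon)(1+2\alpha)^{2\vee(1/\alpha)}/(2\beta\kappa\alpha^2|1-2\alpha|)$, matching the first term of $C$. The second term satisfies $2\eta M\beta^2\le L^2 d/\kappa^2 = \alpha d/(1+\epsilon)\le \alpha d$. This completes the derivation. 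The main technical obstacle is the marginalization-invariance of uniform log-concavity (which I would verify, if not cited, by the Schur-complement identity $\nabla^2 W_{\mathrm{marg}} = \E[\nabla^2_{xx}W] - \mathrm{Cov}(\nabla_x W)$ combined with the matrix Brascamp-Lieb covariance bound); everything else is a routine repackaging of Bakry-Émery.
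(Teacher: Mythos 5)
Your proof is correct and takes essentially the same route as the paper: establish $\beta\kappa$-uniform log-concavity of $P^n$, propagate it to marginals and conditionals via the Brascamp--Lieb marginalization principle (the paper's Lemma~\ref{le:logconcave}), extract the log-Sobolev, Talagrand, and Poincar\'e inequalities from Bakry--\'Emery, verify conditions (1)--(3) of Theorem~\ref{th:main-rev} with $\eta=1/(2\beta\kappa)$, $\transpconst=2L^2/(\beta\kappa)$, $M\le L^2d/(\beta\kappa)$, and substitute. The one place you deviate from the paper is the bound on $M$: the paper refers back to the Corollary~\ref{co:static-convex} argument, which bounds $M$ via Jensen's inequality, Lipschitz continuity of $\nabla V$, and second-moment estimates for $\mu$ and $P^n_1$, while you apply the Poincar\'e inequality for $\mu$ componentwise to the $L$-Lipschitz map $y\mapsto\nabla V(x-y)$ directly. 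Your route is actually a bit cleaner: it yields exactly $M\le L^2d/(\beta\kappa)$ in one step, whereas the paper's second-moment argument, followed literally with the displayed bound $|x-y|^2\le 2|x|^2+2|y|^2$, produces the somewhat worse constant $4L^2d/(\beta\kappa)$ (or $2L^2d/(\beta\kappa)$ if one instead uses that the cross term vanishes by centering). The closing algebra translating $(\eta,\transpconst,M)$ into the stated $\alpha$ and $C$ checks out.
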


We prove Corollary \ref{co:static-convex-rev} by checking that $P^n$ satisfies a Bakry-Emery curvature condition (i.e., that the density is uniformly log-concave) uniformly in $n$, which was essentially due to \cite{malrieu2001logarithmic}. This curvature condition is preserved under conditioning and marginalizing, which allows us to check (2) and (3).
The recent paper \cite{guillin2019uniform} establishes uniform (in $n$) log-Sobolev and Poincar\'e inequalities beyond the convex setting, which are not directly applicable here but similarly involve checking functional inequalities for conditional measures.

\subsection{Related literature} \label{se:relatedliterature}

The companion paper \cite{lacker-dynamic} develops similar ideas in the dynamic context. Precisely, suppose we initialize the SDE system \eqref{intro:SDE} at some distribution $P^n(0)$, and let $P^n(t)$ denote the law at time $t$ of the solution. Similarly, let $\mu(t)$ denote the time-$t$ law of a solution of \eqref{intro:MVSDE}, initialized from some $\mu(0)$. Using an entropic iteration technique similar to that of this paper, it is shown in \cite[Theorem 2.2]{lacker-dynamic} that
\begin{align*}
H(P^n_k(0)\,|\,\mu(0)^{\otimes k}) = O((k/n)^2) \quad \Longrightarrow \quad H(P^n_k(t)\,|\,\mu(t)^{\otimes k}) = O((k/n)^2), \ \ \forall t > 0,
\end{align*}
again with explicit constants. A similar result is shown when the order of arguments is reversed in the relative entropy. This is the \emph{optimal} rate of propagation of chaos in this context, in the sense that we cannot improve the $O((k/n)^2)$ in general, even for i.i.d.\ initializations $P^n(0)=\mu(0)^{\otimes n}$. The results of the present paper provide broad classes of non-i.i.d.\ examples of $P^n(0)$ such that $H(P^n_k(0)\,|\,\mu(0)^{\otimes k})$ or $H(\mu(0)^{\otimes k}\,|\,P^n_k(0))$ is $O((k/n)^2)$.
See \cite{lacker-dynamic} for details and related literature on dynamic models.

\subsubsection{Mean field Gibbs measures} \label{se:MFGibbs}

There is an extensive literature on mean field Gibbs measures of much more general forms than \eqref{def:Pn}. For instance, consider a Polish space $E$, a functional $F : \P(E) \to \R$, and a reference measure $\lambda \in \P(E)$. Consider the Gibbs measure  $dQ^n/d\lambda^{\otimes n} = e^{-nF \circ L_n}/Z_n$, where $L_n : E^n \to \P(E)$ is again the empirical measure. For $F$ continuous and bounded from below, Sanov's theorem and Varadhan's lemma imply that $Q^n \circ L_n^{-1}$ satisfies a large deviation principle with good rate function
\begin{align*}
\P(E) \ni \nu \mapsto F(\nu) + H(\nu\,|\,\lambda) - \inf_{\nu' \in \P(E)}(F(\nu')+ H(\nu'\,|\,\lambda)).
\end{align*}
For pairwise interactions, i.e., $F(\nu) = \langle\nu^{\otimes 2},V\rangle$ for some $V : E^2 \to \R$, this large deviation principle is now known to hold under much broader assumptions which cover singular interactions $V$; these results require much more care, particularly when the temperature is allowed to depend on $n$  \cite{chafai2014first,dupuis2020large,liu2020large,garcia2019large}.
When $F + H(\cdot\,|\,\lambda)$ admits a unique minimizer $\mu$, the large deviation principle implies the law of large numbers $Q^n \circ L_n^{-1} \to \delta_\mu$, which in turn implies the local chaos $Q^n_k \to \mu^{\otimes k}$ as $n\to\infty$ for each fixed $k$. It was shown in \cite[Theorem 1]{benarous-zeitouni}, using large deviation techniques, that this can be made more quantitative when $F$ is a finite-range interaction. In particular, if there is a unique minimizer $\mu$ which is moreover \emph{non-degenerate} in a suitable sense, then $H(P^n\,|\,\mu^{\otimes n}) = O(1)$ by \cite[Theorem 1]{benarous-zeitouni}. By subadditivity \eqref{def:intro:subadditivity}, this implies $H(P^n_k \,|\,\mu^{\otimes k}) = O(k/n)$.
See also \cite[Theorem 2]{benarous-zeitouni} for the case of finitely many non-degenerate minimizers.
These large deviations techniques, however, are  \emph{global} in nature and do not appear to be capable of producing the non-asymptotic  $O((k/n)^2)$ \emph{local} estimates given in  Theorem \ref{th:main} or \ref{th:main-rev}.

It would be interesting to try to adapt our approach beyond the continuous, Euclidean setting, for instance to cover discrete spin systems.
This might involve using Dirichlet forms to replace the Fisher information which is central to our framework, but it is far from clear how this would proceed.

\subsubsection{Finite de Finetti-type theorems}

Quantitative \emph{local chaos} can be interpreted as describing ``how approximately i.i.d." a measure is. The optimal estimate $\|P^n_k-\mu^{\otimes k}\|_{\mathrm{TV}} = O(k/n)$ obtained in \eqref{ineq:newTV} appears to be new for Gibbs measures, though it is known for other specific chaotic sequences.
The first famous example is when $P^n$ is the uniform measure on the sphere of radius $\sqrt{n}$ in $\R^n$, which is well known to be $\gamma$-chaotic for the standard Gaussian measure $\gamma$ on $\R$. It was shown by Diaconis-Freedman \cite[Theorem 1]{diaconis1987dozen} that $\|P^n_k-\gamma^{\otimes k}\|_{\mathrm{TV}} \le 2(k+3)/(n-k-3)$ for $n \ge k + 4$. When the uniform measure $P^n$ is on the simplex instead of the sphere (i.e., on the sphere in $\ell^1$ instead of $\ell^2$), a similar $O(k/n)$ estimate holds with $\gamma$ replaced by the exponential distribution \cite[Theorem 2]{diaconis1987dozen}.
Extensions to $\ell^p$-spheres were developed in \cite{rachev1991approximate,mogul1991finetti,naor2003projecting}, 
and very recently to Orlicz balls \cite{johnston2020maxwell}.
Generalizing in a different direction, earlier work of Diaconis-Freedman \cite{diaconis1988conditional} gives an $O(k/n)$ local chaos estimate for certain i.i.d.\ random vectors conditioned to the sphere, and see \cite[Section 4]{hauray2014kac} and \cite[Section 4]{carlen2008entropy} for entropic perspectives on similar questions.

Also relevant here is yet another theorem of Diaconis-Freedman, on approximating finite exchangeble sequences by mixtures of product measures, essentially quantifying the famous theorems of de Finetti and Hewitt-Savage. They show in \cite{diaconis1980finite} that any symmetric probability measure $P^n$ on $E^n$, where $E$ is a finite set, satisfies
\[
\Big\|P^n_k-\int_{\P(E)} m^{\otimes k}\,\rho_n(dm)\Big\|_{\mathrm{TV}} \le 2|E|k/n,
\]
for some $\rho_n \in \P(\P(E))$.
If $E$ is infinite, they give instead a bound of $k(k-1)/n$. Each bound is optimal absent further assumptions.
In fact, one can always take $\rho_n = P^n \circ L_n^{-1}$ for the latter bound;
underlying this argument is the observation that $P^n_k$ represents the law of $k$ particles chosen at random \emph{without replacement}, whereas $\int_{E^n} L_n^{\otimes k}\,dP^n$ represents the law of $k$ particles chosen at random \emph{with replacement}.
One consequence of our main results is an identification of certain classes of finite exchangeable measures on continuous state space for which this $k(k-1)/n$ bound may be improved to $O(k/n)$, matching the finite state space case. In our case, of course, we are dealing with exchangeable measures that are in fact approximately i.i.d., and so our ``mixing measure" $\rho_n$ is taken to be $\delta_{\mu}$.

Recent work in random matrix theory has explored similar quantitative approximate independence properties for various natural models. Using explicit forms of the densities,  \cite{diaconis1992finite} gives non-asymptotic estimates of the total variation distance between a $p \times q$ matrix of i.i.d.\ Gaussians and the upper-left $p \times q$ submatrix of a uniformly random $n \times n$ orthogonal matrix.
In a similar spirit, the recent work \cite{bubeck2018entropic} bounds the total variation distance between Wishart and Wigner matrices of high dimension, via an inductive argument relying on the chain rule for relative entropy which is not terribly distant from our method.

\subsubsection{Local propagation of chaos} \label{se:global-to-local}

As we have mentioned, there are apparently not many methods for \emph{quantitatively} relating global chaos \eqref{intro:def:globalchaos} and local chaos \eqref{intro:def:localchaos}. Subadditivity inequalities like \eqref{def:intro:subadditivity} give one method for passing from global to local estimates, which our results show to be suboptimal in our  setting.
The only alternative technique we are aware of is due to the recent work \cite{hauray2014kac}.
The approach of \cite[Theorem 2.4]{hauray2014kac} starts from the triangle inequality, 
\begin{align*}
\W_1(P^n_k,\mu^{\otimes k}) &\le \W_1\bigg(P^n_k, \int L_n^{\otimes k}\,dP^n\bigg) + \W_1\bigg(\int L_n^{\otimes k}\,dP^n, \, \mu^{\otimes k}\bigg).
\end{align*}
Under a suitable moment assumption, using the same combinatorial argument underlying the Diaconis-Freedman estimate $k(k-1)/n$ mentioned above, the first term is shown to be $O(k^2/n)$.
The second term can be bounded from above by
\begin{align*}
\W_1\bigg(\int L_n^{\otimes k}\,dP^n, \, \mu^{\otimes k}\bigg) \le \int \W_1(L_n^{\otimes k},\mu^{\otimes k})\,dP^n = k \int \W_1(L_n,\mu)\,dP^n,
\end{align*}
with the first bound coming from convexity of $\W_1$ and the second from an additivity argument  \cite[Proposition 2.6(i)]{hauray2014kac}.
Ultimately, the first $O(k^2/n)$ term bound already falls short of our optimal local estimate of $\W_1(P^n_k,\mu^{\otimes n}) = O(k/n)$ (see Corollary \ref{co:static-convex}), and the second will surely be no better than $O(k / n^{(1/2) \wedge (1/d)})$ as it is governed by the mean rate of convergence of the (global) empirical measure in Wasserstein distance \cite{fournier2015rate}.

\section{Proofs of main theorems} \label{se:proofs-static}

This section is devoted to the proofs of Theorems \ref{th:main} and \ref{th:main-rev}.
In both cases, we may assume without loss of generality that $\beta=1$, noting that the general case can be recovered from the $\beta=1$ case by changing $M$ and $\transpconst$ to $M\beta^2$ and $\transpconst\beta^2$, respectively.
Both theorems make use of the following simple calculation of the logarithmic gradients of marginal densities.
Recall that $P^n_{k+1|k}(x)$ denotes the conditional law of $X_{k+1}$ given $(X_1,\ldots,X_k)=x$, under $P^n$.
For $1\le i \le k$ and a function $f$ on $(\R^d)^k$, we write $\nabla_if$ for the gradient in the $i^\text{th}$ argument.

\begin{lemma} \label{le:inv-conditional}
Let $1 \le i \le k < n$. The marginal distribution $P^n_k$ satisfies
\begin{align*}
-\nabla_i \log \frac{dP^n_k}{d\mu^{\otimes k}}(x_1,\ldots,x_k) &= \frac{1}{n-1}\sum_{j \le k, \, j \neq i}\big(\nabla_1 V(x_i,x_j)-\langle \mu,\nabla_1V(x_i,\cdot)\rangle\big) \\
	&\qquad  + \frac{n-k}{n-1}\big\langle P^n_{k+1|k}(x_1,\ldots,x_k) - \mu, \nabla_1 V(x_i,\cdot)\big\rangle .
\end{align*}
\end{lemma}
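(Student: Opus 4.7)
As noted at the start of the section, it suffices to treat $\beta=1$; the general case follows by rescaling. The plan is to compute $\nabla_i \log (dP^n_k/d\lambda^{\otimes k})$ directly from the explicit density \eqref{def:Pn} by differentiating under the integral, then subtract the elementary quantity $\nabla_i \log (d\mu^{\otimes k}/d\lambda^{\otimes k})$ obtained from the fixed point equation \eqref{def:mu-fixedpoint}, and finally rearrange using the identity $\tfrac{k-1}{n-1}+\tfrac{n-k}{n-1}=1$.

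First I would integrate out the last $n-k$ coordinates to obtain
\[
\frac{dP^n_k}{d\lambda^{\otimes k}}(x_1,\ldots,x_k) = \frac{1}{Z_n}\int_{(\R^d)^{n-k}} \exp\Big(-\tfrac{1}{n-1}\sum_{1\le i<j\le n} V(x_i,x_j)\Big)\,\lambda^{\otimes(n-k)}(dx_{k+1},\ldots,dx_n).
\]
Differentiating in $x_i$ for $1\le i\le k$ brings down the factor $-\tfrac{1}{n-1}\sum_{j\neq i}\nabla_1 V(x_i,x_j)$ inside the integral. Splitting the sum into the indices $j\le k$, $j\neq i$ (which pull outside the integral) and $j>k$ (which are integrated against the conditional density of $X_j$ given $(X_1,\ldots,X_k)=(x_1,\ldots,x_k)$), and then invoking exchangeability of $P^n$ to identify the conditional law of each $X_j$ with $j>k$ as $P^n_{k+1|k}(x_1,\ldots,x_k)$, yields
\[
\nabla_i \log \frac{dP^n_k}{d\lambda^{\otimes k}}(x_1,\ldots,x_k) = -\tfrac{1}{n-1}\sum_{j\le k,\,j\neq i}\nabla_1 V(x_i,x_j) - \tfrac{n-k}{n-1}\bigl\langle P^n_{k+1|k}(x_1,\ldots,x_k),\nabla_1 V(x_i,\cdot)\bigr\rangle.
\]
The differentiation under the integral is legitimate because $\nabla_1 V\in L^1_{\mathrm{loc}}$ by Assumption \ref{assumption:A}, together with the exponential factor and the integrability guarantee built into the very statement of the lemma (the right-hand side is finite $P^n_k$-a.e.\ under \eqref{asmp:static-moment} or \eqref{asmp:static-moment-rev}).

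Next, the fixed point equation \eqref{def:mu-fixedpoint} gives $\log(d\mu/d\lambda)(x)=-\langle\mu,V(x,\cdot)\rangle-\log Z$, so
\[
\nabla_i \log \frac{d\mu^{\otimes k}}{d\lambda^{\otimes k}}(x_1,\ldots,x_k) = -\langle\mu,\nabla_1 V(x_i,\cdot)\rangle.
\]
Subtracting and using $\tfrac{k-1}{n-1}+\tfrac{n-k}{n-1}=1$ to distribute the term $\langle\mu,\nabla_1 V(x_i,\cdot)\rangle$ between the two resulting sums gives exactly the claimed formula. There is no real obstacle here beyond careful bookkeeping; the only subtle point is to justify interchange of differentiation and integration, which follows from the local integrability hypothesis in Assumption \ref{assumption:A} and a standard dominated-convergence argument.
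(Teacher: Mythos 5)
Your proposal is correct and follows essentially the same route as the paper: both proofs integrate out the last $n-k$ coordinates, differentiate under the integral, split the sum $\sum_{j\neq i}\nabla_1 V(x_i,x_j)$ into the $j\le k$ terms (which factor out) and the $j>k$ terms (which become the conditional expectation via exchangeability), and then account for the $\mu$-normalization from \eqref{def:mu-fixedpoint}. The only cosmetic difference is bookkeeping order: you first compute $\nabla_i\log(dP^n_k/d\lambda^{\otimes k})$ and then subtract $\nabla_i\log(d\mu^{\otimes k}/d\lambda^{\otimes k})$, whereas the paper works directly with $f^n_k = dP^n_k/d\mu^{\otimes k}$ from the start, marginalizing $f^n_n$ against $\mu^{\otimes(n-k)}$ rather than against $\lambda^{\otimes(n-k)}$. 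One small remark: the paper's justification for interchanging differentiation and integration is simply that $V$ is bounded from below (so the exponential factor is uniformly bounded above), which is cleaner and more self-contained than the slightly circular appeal to \eqref{asmp:static-moment}/\eqref{asmp:static-moment-rev} (the lemma is used before those hypotheses enter); you should also note explicitly where symmetry of $V$ is used to write all the pair derivatives as $\nabla_1 V(x_i,x_j)$.
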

\begin{proof}
Let $f^n_k = dP^n_k/d\mu^{\otimes k}$.
From the formulas \eqref{def:Pn} and \eqref{def:mu-fixedpoint},
\begin{align*}
f^n_n(x_1,\ldots,x_k) = \frac{Z}{Z_n}\exp\bigg(-\frac{1}{n-1}\sum_{1 \le i < j \le n} \big( V(x_i,x_j) - \langle \mu, V(x_i,\cdot)\rangle\big)\bigg).
\end{align*}
Start from the identity
\begin{align*}
f^n_k(x_1,\ldots,x_k) &= \int_{(\R^d)^{n-k}}f^n_n(x_1,\ldots,x_n) \, \mu(dx_{k+1})\cdots \mu(dx_n).
\end{align*}
Since $V$ is bounded from below, 
we may exchange differentiation and integration to express $-\nabla_i \log f^n_k(x_1,\ldots,x_k)$ as the conditional expectation
\begin{align*}
\frac{\int_{(\R^d)^{n-k}} \frac{1}{n-1}\sum_{j \neq i} \big(\nabla_1 V(x_i,x_j) - \langle \mu, \nabla_1 V(x_i,\cdot)\rangle\big) f^n_n(x_1,\ldots,x_n) \, \mu(dx_{k+1})\cdots \mu(dx_n)}{f^n_k(x_1,\ldots,x_k)}.
\end{align*}
Note this also used symmetry of $V$.
The $j \le k$ terms can be pulled out of the integral, as they are not integrated. For each $j > k$, exchangeability implies
\begin{align*}
 \frac{\int_{(\R^d)^{n-k}}\nabla_1 V(x_i,x_j) f^n_n(x_1,\ldots,x_n) \, \mu(dx_{k+1})\cdots \mu(dx_n)}{f^n_k(x_1,\ldots,x_k)} = \big\langle P^n_{k+1|k}(x_1,\ldots,x_k), \nabla_1 V(x_i,\cdot)\big\rangle.
\end{align*}
This completes the proof.
\end{proof}

\subsection{Proof of Theorem \ref{th:main}}
{\ }

\noindent\textit{Step 1.} 
The first step will yield an $O(k^3/n^2)$ estimate, which we improve to $O(k^2/n^2)$ in the second step.
Let $1 \le k < n$. 
Define $J_k \ge 0$ by
\begin{align*}
J_k^2 := \frac{1}{k}I(P^n_k\,|\,\mu^{\otimes k}) = \int_{(\R^d)^k}\bigg| \nabla_1 \log \frac{dP^n_k}{d\mu^{\otimes k}}\bigg|^2 dP^n_k,
\end{align*}
where the second identity follows from exchangeability. Lemma \ref{le:inv-conditional} yields
\begin{align*}
J_k^2 = \int_{(\R^d)^k}\Bigg| & \frac{1}{n-1}\sum_{j =2}^k\big(\nabla_1 V(x_1,x_j)- \langle \mu, \nabla_1 V(x_1,\cdot)\rangle\big) \\
	& + \frac{n-k}{n-1}\langle P^n_{k+1|k}(x) - \mu,\nabla_1 V(x_1,\cdot)\rangle  \Bigg|^2\!P^n_k(dx).
\end{align*}
By the triangle inequality, we have
\begin{align}
\begin{split}
J_k \le \ & \bigg( \int_{(\R^d)^k}\bigg|\frac{1}{n-1} \sum_{j =2}^k \big( \nabla_1 V(x_1,x_j)  - \langle \mu, \nabla_1 V(x_1,\cdot)\rangle\big) \bigg|^2 P^n_k(dx)\bigg)^{1/2} \\
	& + \bigg( \int_{(\R^d)^k}\bigg|\frac{n-k}{n-1} \langle P^n_{k+1|k}(x) - \mu, \nabla_1 V(x_1,\cdot)\rangle \bigg|^2 P^n_k(dx)\bigg)^{1/2}.
	\end{split} \label{pf:static-key1}
\end{align}
Using exchangeability and the triangle inequality, the first term of \eqref{pf:static-key1} is bounded by
\begin{align*}
\frac{k-1}{n-1}\sqrt{M},
\end{align*}
where $M$ was defined in \eqref{asmp:static-moment}.
Similarly, we note here for later use that if $k=n$ then 
\begin{align}
J_n = \bigg( \int_{(\R^d)^k}\bigg|\frac{1}{n-1} \sum_{j =2}^n \big( \nabla_1 V(x_1,x_j)  - \langle \mu, \nabla_1 V(x_1,\cdot)\rangle\big) \bigg|^2 P^n_k(dx)\bigg)^{1/2} \le \sqrt{M}. \label{pf:static-Jnbound}
\end{align}
To estimate the second term of \eqref{pf:static-key1} for general $k$, let $(x,x_{k+1})$ denote a generic element of $(\R^d)^{k+1}$, and use the assumption \eqref{asmp:static-transp} to get
\begin{align*}
|\langle  P^n_{k+1|k}(x) - \mu, \nabla_1 V(x_1,\cdot)\rangle|^2 
	&\le \transpconst I( P^n_{k+1|k}(x) \,|\, \mu ) \\
	&= \transpconst \int_{\R^d} \left| \nabla_{k+1} \log \frac{dP^n_{k+1|k}(x)}{d\mu}(x_{k+1})\right|^2 \, P^n_{k+1|k}(x)(dx_{k+1}) \\
	&= \transpconst \int_{\R^d} \left| \nabla_{k+1} \log \frac{dP^n_{k+1}}{d\mu^{\otimes (k+1)}}(x,x_{k+1})\right|^2 \, P^n_{k+1|k}(x)(dx_{k+1}),
\end{align*}
where the last step above follows from the identity
\begin{align*}
\frac{dP^n_{k+1}}{d\mu^{\otimes (k+1)}}(x,x_{k+1}) &= \frac{dP^n_{k+1|k}(x)}{d\mu}(x_{k+1})\frac{dP^n_k}{d\mu^{\otimes k}}(x).
\end{align*}
Thus, discarding the factor $(n-k)/(n-1)$, the second term of \eqref{pf:static-key1} is bounded by
\begin{align*}
&\left[\transpconst \int_{(\R^d)^k} \int_{\R^d} \left| \nabla_{k+1} \log \frac{dP^n_{k+1}}{d\mu^{\otimes (k+1)}}(x,x_{k+1})\right|^2 \!\! P^n_{k+1|k}(x)(dx_{k+1})  P^n_k(dx)\right]^{1/2},
\end{align*}
which is exactly equal to $\sqrt{\transpconst } J_{k+1}$ by exchangeability. Putting it together, we deduce from \eqref{pf:static-key1} that
\begin{align*}
J_k \le \frac{k-1}{n-1}\sqrt{M} + \sqrt{\transpconst } J_{k+1}.
\end{align*}
Iterate this inequality to get
\begin{align}
J_k &\le \frac{\sqrt{M}}{n-1}\sum_{\ell=k}^{n-1} \transpconst^{\tfrac12(\ell-k)}(\ell - 1) + \transpconst^{\tfrac12(n-k)} J_n. \label{pf:Jk-iterate1}
\end{align}
Since $\transpconst < 1$, for $k \ge 2$ we have the estimate
\begin{align*}
\sum_{\ell=k}^{n-1} \transpconst^{\tfrac12(\ell-k)}(\ell - 1) &= \sum_{\ell=0}^{n-k-1} \transpconst^{\tfrac12 \ell }(\ell + k - 1) \le \sum_{\ell=0}^\infty \transpconst^{\tfrac12 \ell } (\ell+1) + (k - 2)\sum_{\ell=0}^\infty \transpconst^{\tfrac12 \ell } \\
	&= \frac{1}{(1-\sqrt{\transpconst})^2} + \frac{k-2}{1-\sqrt{\transpconst}} \\
	&= \frac{\sqrt{\transpconst} + (1-\sqrt{\transpconst})(k-1)}{(1-\sqrt{\transpconst})^2}  \\
	&\le \frac{k-1}{(1-\sqrt{\transpconst})^2}.
\end{align*}
Thus, using also \eqref{pf:static-Jnbound}, the estimate \eqref{pf:Jk-iterate1} implies
\begin{align}
J_k &\le \frac{ \sqrt{M } }{(1-\sqrt{\transpconst})^2} \frac{k-1}{n-1} + \transpconst^{\tfrac12(n-k)} \sqrt{M}. \label{pf:Jk-iterate2}
\end{align}
Finally, this yields
\begin{align}
I(P^n_k\,|\,\mu^{\otimes k}) &= kJ_k^2 \le kM\left(\frac{ 1 }{(1-\sqrt{\transpconst})^2} \frac{k-1}{n-1} + \transpconst^{\tfrac12(n-k)} \right)^2. \label{pf:static-k^3}
\end{align}
This is $O(k^3/n^2)$ instead of the desired $O(k^2/n^2)$, so we perform one more step.

{\ }

\noindent\textit{Step 2.} 
Knowing the inequality \eqref{pf:static-k^3}, we can now use it to improve the estimate of first term in \eqref{pf:static-key1} as follows. Expand the square and use exchangeability to get
\begin{align*}
& \int_{(\R^d)^k}  \Bigg|\frac{1}{n-1} \sum_{j =2}^k \big( \nabla_1 V(x_1,x_j)  - \langle \mu, \nabla_1 V(x_1,\cdot)\rangle\big) \Bigg|^2 P^n_k(dx) \\
	& \qquad = \frac{k-1}{(n-1)^2}M + \frac{(k-1)(k-2)}{(n-1)^2}R,
\end{align*}
where we define
\begin{align*}
R &:= \int_{(\R^d)^3} \big( \nabla_1 V(x_1,x_2)  - \langle \mu, \nabla_1 V(x_1,\cdot)\rangle\big) \cdot \big( \nabla_1 V(x_1,x_3)  - \langle \mu, \nabla_1 V(x_1,\cdot)\rangle\big) \, P^n_3(dx).
\end{align*}
Condition on $(x_1,x_2)$ and use Cauchy-Schwarz to get
\begin{align*}
R &= \int_{(\R^d)^2}  \big( \nabla_1 V(x_1,x_2)  - \langle \mu, \nabla_1 V(x_1,\cdot)\rangle\big) \cdot \langle P^n_{3|2}(x_1,x_2) -  \mu, \nabla_1 V(x_1,\cdot)\rangle \, P^n_2(dx_1,dx_2) \\
	&\le \bigg(M\int_{(\R^d)^2} |\langle P^n_{3|2}(x_1,x_2) -  \mu, \nabla_1 V(x_1,\cdot)\rangle|^2 \, P^n_2(dx_1,dx_2)\bigg)^{1/2}.
\end{align*}
Use the assumption \eqref{asmp:static-transp} to bound the integrand by $\transpconst I(P^n_{3|2}(x_1,x_2)\,|\,\mu)$,  which yields
\begin{align*}
R &  \le \bigg(\transpconst M \int_{(\R^d)^2} \int_{\R^d} \bigg|\nabla \log \frac{dP^n_{3|2}(x_1,x_2)}{d\mu}(x_3) \bigg|^2 P^n_{3|2}(x_1,x_2)(dx_3) \, P^n_2(dx_1,dx_2)\bigg)^{1/2} \\
&  = \bigg(\transpconst M \int_{(\R^d)^3} \bigg|\nabla \log \frac{dP^n_3}{d\mu^{\otimes 3}}\bigg|^2  \,dP^n_3\bigg)^{1/2} \\
&  = J_3\sqrt{\transpconst M}.
\end{align*}
Applying \eqref{pf:Jk-iterate2} with $k=3$,
\begin{align*}
R  &\le \frac{ M\sqrt{\transpconst} }{(1-\sqrt{\transpconst})^2} \frac{2}{n-1} + \transpconst^{\tfrac12(n-2)} M.
\end{align*}
Note since $n \ge 3$ that $(n-1) \le 2(n-2)$. Use also $\sup_{x > 0} x \transpconst^{x/2} = 2/e\log(1/\transpconst)$ to get
\begin{align*}
R  &\le \frac{MC_1}{n-1}, \ \ \ \text{ where } \ \ \ C_1 :=  \frac{ 2 \sqrt{\transpconst } }{(1-\sqrt{\transpconst})^2} + \frac{4}{e \log(1/\transpconst)}
\end{align*}
Putting it together, we have thus improved the bound on the first term of \eqref{pf:static-key1} to 
\begin{align*}
& \int_{(\R^d)^k}  \Bigg|\frac{1}{n-1} \sum_{j =2}^k \big( \nabla_1 V(x_1,x_j)  - \langle \mu, \nabla_1 V(x_1,\cdot)\rangle\big) \Bigg|^2 P^n_k(dx) \\
	& \qquad \le \frac{k-1}{(n-1)^2}M + \frac{(k-1)(k-2)}{(n-1)^3} M C_1 \\
	& \qquad \le \frac{k-1}{(n-1)^2}M(1+C_1).
\end{align*}
The bound on the second term of \eqref{pf:static-key1} remains the same as before, and we thus get
\begin{align*}
J_k \le \frac{\sqrt{k-1}}{n-1}\sqrt{M(1+C_1)} + \sqrt{\transpconst } J_{k+1}.   
\end{align*}
Iterate this inequality to get 
\begin{align}
J_k &\le \frac{\sqrt{M(1+C_1)}}{n-1}\sum_{\ell=k}^{n-1} \transpconst^{\tfrac12(\ell-k)}\sqrt{\ell - 1} + \transpconst^{\tfrac12(n-k)} J_n. \label{pf:Jk-iterate1-2}
\end{align}
Estimate the summation by noting that $\transpconst < 1$ implies
\begin{align*}
\sum_{\ell=k}^{n-1} \transpconst^{\tfrac12(\ell-k)}\sqrt{\ell - 1} &= \sum_{\ell=0}^{n-k-1} \transpconst^{\tfrac12 \ell} \sqrt{\ell + k - 1} \le \sum_{\ell=0}^\infty \transpconst^{\tfrac12 \ell } \sqrt{\ell} + \sqrt{k - 1}\sum_{\ell=0}^\infty \transpconst^{\tfrac12 \ell }.
\end{align*}
This is bounded by $C_2\sqrt{k - 1}$, where we define $C_2 := 2\sum_{\ell=0}^\infty \transpconst^{\tfrac12 \ell }\sqrt{\ell}$.
We deduce from \eqref{pf:Jk-iterate1-2} and also \eqref{pf:static-Jnbound} that
\begin{align*}
J_k &\le C_2\sqrt{M(1+C_1)}\frac{\sqrt{k-1}}{n-1} + \transpconst^{\tfrac12(n-k)} \sqrt{M}.
\end{align*}
Finally, this yields
\begin{align}
I(P^n_k\,|\,\mu^{\otimes k}) &= kJ_k^2 \le k M  \left( C_2^2(1 + C_1)  \frac{\sqrt{k-1}}{n-1} + \transpconst^{\tfrac12(n-k)} \right)^2. \label{pf:static-k^2}
\end{align}

The constant $C_2^2(1+C_1)$ can be simplified by noting that
\begin{align*}
1 + C_1 = \frac{ 1 + \transpconst }{(1-\sqrt{\transpconst})^2} + \frac{4}{e \log(1/\transpconst)}.
\end{align*}
To estimate $C_2$, note for $0 < y <1$ that
\begin{align*}
\sum_{\ell=0}^\infty \sqrt{\ell} y^\ell &\le \int_0^\infty \sqrt{x} y^x \,dx =  2\int_0^\infty z^2 e^{z^2\log y} \,dz = \sqrt{\pi/ \log(1/y)}.
\end{align*}
Apply this with $y=\sqrt{\transpconst}$ to get $C_2 \le 2\sqrt{ 2\pi/ \log (1/\transpconst)}$. Hence,
\begin{align*}
C_2^2(1+C_1) &\le  \frac{ 8\pi  }{ \log (1/\transpconst) } \left( \frac{  1 + \transpconst }{(1-\sqrt{\transpconst})^2  } + \frac{4}{e \log (1/\transpconst)}\right).
\end{align*}
{\ } \vskip-1cm \hfill\qedsymbol

\subsection{Proof of Corollary \ref{co:static-bounded}} \label{se:proof:bded}
We first borrow an argument from \cite[Theorem 3.1]{guillin2009transportation}:
Let $\nu \in \P(\R^d)$ with $\nu\ll\mu$ and $f=d\nu/d\mu$. By Cauchy-Schwarz,
\begin{align*}
\|\nu-\mu\|_{\mathrm{TV}}^2 &= \bigg(\int |f-1|\,d\mu\bigg)^2 = \bigg(\int |(\sqrt{f}+1)(\sqrt{f}-1)|\,d\mu\bigg)^2 \\
	&\le \int (\sqrt{f}+1)^2\,d\mu \int (\sqrt{f}-1)^2\,d\mu = 4\Var_\mu(\sqrt{f}).
\end{align*}
Apply the assumed Poincar\'e inequality \eqref{asmp:co:static-poincare} to get
\begin{align}
\|\nu-\mu\|_{\mathrm{TV}}^2 \le c_\mu I(\nu\,|\,\mu), \qquad \forall \nu \in \P(\R^d). \label{ineq:Fisher-Pinsker}
\end{align}
Using the boundedness assumption (1), we deduce
\begin{align*}
|\langle \mu - \nu, \nabla_1 V(x,\cdot)\rangle|^2 \le  L^2 c_\mu I(\nu\,|\,\mu), \ \ \forall x \in \R^d, \ \nu \in \P(\R^d).
\end{align*}
This shows that the assumption \eqref{asmp:static-transp} of Theorem \ref{th:main} holds, and clearly the assumption \eqref{asmp:static-moment} also holds with $M \le 2L^2$. We may thus apply Theorem \ref{th:main}. 
Lastly, note that the inequality \eqref{ineq:Fisher-Pinsker} above is precisely the case $k=1$ of the claim \eqref{ineq:hamming1}. Tensorize this transport inequality as in \cite[Corollary 2.13]{guillin2009transportation} to get the claim.
\hfill\qedsymbol

\begin{remark}
Corollary \ref{co:static-bounded} could likely be generalized, using a more involved inequality such as \cite[Theorem 5.1]{guillin2009transportation} instead of \eqref{ineq:Fisher-Pinsker} to deduce a \emph{weighted} Pinsker-type inequality from the assumed Poincar\'e inequality. This would allow us to relax the assumption of boundedness of $\nabla_1V$, but for the sake of brevity we do not pursue this.
\end{remark}

\subsection{Proof of Theorem \ref{th:main-rev}}

We again assume without loss of generality that $\beta=1$.
Abbreviate $H_k:= H( \mu^{\otimes k} \,|\, P^n_k)$ and $I_k:= I( \mu^{\otimes k} \,|\, P^n_k)$.
First, expand the square, using the definition of $M$ from \eqref{asmp:static-moment-rev} and exchangeability to compute
\begin{align*}
I_n &= \sum_{i=1}^n\int_{(\R^d)^n}\Bigg|\frac{1}{n-1} \sum_{ j \neq i}\big(\nabla_1 V(x_i,x_j) - \langle \mu, \nabla_1 V(x_i,\cdot)\rangle\big) \Bigg|^2 \mu^{\otimes n}(dx) = \frac{n}{n-1}M.
\end{align*}
Using the assumption \eqref{asmp:static-LSI-rev}, this implies, for each $k=1,\ldots,n$, that
\begin{align}
H_k &\le H_n \le \eta I_n \le \frac{n}{n-1}\eta M. \label{pf:static-rev-apriori1}
\end{align}
Now fix $n > k \ge 1$. 
By Lemma \ref{le:inv-conditional}, 
\begin{align*}
I_k \le \sum_{i=1}^k\!\int_{(\R^d)^k} \bigg| & \frac{1}{n-1}\!\sum_{j \le k, \, j \neq i} \big(\nabla_1 V(x_i,x_j)  - \langle \mu, \nabla_1 V(x_i,\cdot)\rangle\big) \\
 & + \frac{n-k}{n-1}\langle P^n_{k+1|k}(x)-\mu, \nabla_1 V(x_i,\cdot)\rangle \bigg|^2 \mu^{\otimes k}(dx).
\end{align*}
Let $\epsilon > 0$. Use exchangeability  and the inequality $(a+b)^2 \le (1+\epsilon)a^2 + (1+\epsilon^{-1})b^2$ to get
\begin{align*}
I_k \le \ & (1+\epsilon^{-1})k \int_{(\R^d)^k}\bigg|\frac{1}{n-1}\!\sum_{j \le k, \, j \neq i}\big(\nabla_1 V(x_i,x_j - \langle \mu, \nabla_1 V(x_i,\cdot)\rangle \big) \bigg|^2 \mu^{\otimes k}(dx) \\
	&+ (1+\epsilon)k\int_{(\R^d)^k}\bigg| \frac{n-k}{n-1} \big\langle P^n_{k+1|k}(x) - \mu, \nabla_1 V(x_i,\cdot)\big\rangle \bigg|^2 \mu^{\otimes k}(dx).
\end{align*}
The first term, using independence, is equal to
\begin{align*}
(1+\epsilon^{-1}) \frac{k(k-1)}{(n-1)^2}.
\end{align*}
The second term we bound by first discarding the term $(n-k)/(n-1)$. Then use the assumption \eqref{asmp:static-transp-rev} followed by the chain rule for relative entropy \cite[Theorem 2.6]{budhiraja-dupuis} to get
\begin{align*}
\int_{(\R^d)^k}\Big|\big\langle P^n_{k+1|k}(x) - \mu, \nabla_1 V(x_i,\cdot)\big\rangle\Big|^2 \mu^{\otimes k}(dx) &\le \transpconst  \int_{(\R^d)^k} H\big( \mu \,|\, P^n_{k+1|k}(x) \big) \, \mu^{\otimes k}(dx) \\
	&= \transpconst (H_{k+1}-H_k).
\end{align*}
Putting it together, and using the assumption \eqref{asmp:static-LSI-rev}, we find
\begin{align*}
H_k &\le \eta I_k \le \eta  (1+\epsilon^{-1}) \frac{k(k-1)}{(n-1)^2} + \eta \transpconst (1+\epsilon)k (H_{k+1}-H_k).
\end{align*}
Rearrange this to get
\begin{align*}
H_k &\le a_k + c_kH_{k+1}, \ \text{ where } \  a_k := \frac{ \eta  (1+\epsilon^{-1}) \frac{k(k-1)}{(n-1)^2}}{ 1 +  \eta \transpconst(1+\epsilon)k }, \quad c_k := \frac{ \eta \transpconst (1+\epsilon)k}{ 1 +  \eta \transpconst (1+\epsilon)k }.
\end{align*}
Iterate this inequality to get
\begin{align}
H_k &\le \sum_{\ell=k}^{n-1} \bigg(\prod_{j=k}^\ell c_j\bigg) \frac{a_\ell}{c_\ell} +  \bigg(\prod_{j=k}^{n-1} c_j\bigg)H_{n}. \label{pf:rev-iter1}
\end{align}
Abbreviate $\alpha := \eta \transpconst (1+\epsilon)$, so that $c_j = \alpha j/(1+\alpha j)$.
We then have the estimate
\begin{align*}
\prod_{j=k}^\ell c_j &= \prod_{j=k}^{\ell}\frac{\alpha j}{1 + \alpha  j} = \exp\sum_{j=k}^\ell \log \bigg( 1 - \frac{1}{1+ \alpha  j}\bigg) \le \exp\bigg( - \sum_{j=k}^\ell \frac{1}{1 + \alpha j } \bigg)  \\
	&\le \exp\bigg( - \int_{k}^{\ell} \frac{1}{1 + \alpha u }du \bigg) = \exp\bigg( - \frac{1}{\alpha } \log\frac{1+\alpha \ell}{1+\alpha k}\bigg) \\
	&= \bigg(\frac{1 + \alpha k}{1 + \alpha \ell}\bigg)^{\tfrac{1}{\alpha }}.
\end{align*}
Thus, using also \eqref{pf:static-rev-apriori1}, and noting that $\frac{a_\ell}{c_\ell} = \frac{\ell-1}{\transpconst  \epsilon (n-1)^2}$, we deduce from \eqref{pf:rev-iter1} that
\begin{align*}
H_k &\le \frac{1 }{\transpconst \epsilon (n-1)^2}\sum_{\ell=k}^{n-1} (\ell-1)\left(\frac{1 + \alpha k}{1 + \alpha \ell}\right)^{\tfrac{1}{\alpha }} +  \eta M \frac{n}{n-1}\left(\frac{1 + \alpha k}{1 + \alpha (n-1)}\right)^{\tfrac{1}{\alpha }}.
\end{align*}
We lastly estimate the sum
\begin{align*}
\sum_{\ell=k}^{n-1} (\ell-1)(1 + \alpha \ell)^{-\tfrac{1}{\alpha }} &\le \frac{1}{\alpha}\sum_{\ell=k}^{n-1} (1 + \alpha \ell)^{1-\tfrac{1}{\alpha }}.
\end{align*}

\textit{Case 1.}
Suppose $\alpha > 1/2$. Then
\begin{align*}
\sum_{\ell=k}^{n-1} (1 + \alpha \ell)^{1-\tfrac{1}{\alpha }} &\le \int_{0}^{n}(1 + \alpha x)^{1-\tfrac{1}{\alpha }} \, dx = \frac{1}{2\alpha - 1 }(1 + \alpha n)^{2-\tfrac{1}{\alpha }}.
\end{align*}
and we get
\begin{align*}
H_k &\le \frac{ (1 + \alpha k)^{\tfrac{1}{\alpha}} }{(2\alpha-1) \alpha \transpconst \epsilon (n-1)^2}(1 + \alpha n)^{2-\tfrac{1}{\alpha }} +  \eta M \frac{n}{n-1}\left(\frac{1 + \alpha k}{1 + \alpha (n-1)}\right)^{\tfrac{1}{\alpha }} \\
	&\le \left(\frac{ 1 }{(2\alpha-1) \alpha \transpconst \epsilon } \left( \frac{1 + \alpha n}{n-1}\right)^2 +  \eta M \frac{n}{n-1} \right)\left(\frac{1 + \alpha k}{1 + \alpha (n-1)}\right)^{\tfrac{1}{\alpha }} \\
	&\le \left(\frac{ (1+2\alpha)^2}{(2\alpha -1) \alpha \transpconst \epsilon }  +  2\eta M \right)\left(\frac{1 + \alpha k}{1 + \alpha (n-1)}\right)^{\tfrac{1}{\alpha }},
\end{align*}
where the last step used $1+\alpha n \le (1+2\alpha)(n-1)$ for $n \ge 2$, and $n\le 2(n-1)$.

\textit{Case 2.} Now suppose $1/2 > \alpha > 0$. Use the estimate
\begin{align*}
\sum_{\ell=k}^{n-1} (1 + \alpha \ell)^{1-\tfrac{1}{\alpha }} &\le \int_{k-1}^\infty(1 + \alpha x)^{1-\tfrac{1}{\alpha }} \, dx = \frac{1}{1-2\alpha}(1+\alpha(k-1))^{2-\tfrac{1}{\alpha }},
\end{align*}
and we get
\begin{align*}
H_k &\le \frac{ (1 + \alpha k)^{\tfrac{1}{\alpha}} }{(1-2\alpha) \alpha  \transpconst \epsilon (n-1)^2}(1 + \alpha (k-1))^{2-\tfrac{1}{\alpha }} +  \eta  M \frac{n}{n-1}\left(\frac{1 + \alpha k}{1 + \alpha (n-1)}\right)^{\tfrac{1}{\alpha }} \\
	&\le \frac{ (1+\alpha)^{\frac{1}{\alpha}} }{(1-2\alpha) \alpha \transpconst \epsilon  }\left(\frac{1+\alpha(k-1)}{n-1}\right)^2 + 2\eta  M \left(\frac{1 + \alpha k}{1 + \alpha (n-1)}\right)^{\tfrac{1}{\alpha }},
\end{align*}
where we used $1+\alpha k \le (1+\alpha(k-1))(1+\alpha)$ for $k \ge 1$. Noting that $1/\alpha > 2$ and $1+ \alpha(n-1) \le n-1$ for $\alpha < 1/2$ and $n \ge 2$, we get
\begin{align*}
H_k &\le \left(\frac{ (1+\alpha)^{\frac{1}{\alpha}} }{(1-2\alpha)\alpha  \transpconst \epsilon  } + 2\eta  M \right)\left(\frac{1 + \alpha k}{1 + \alpha (n-1)}\right)^2.
\end{align*}

\section{The case of convex potentials} \label{se:proofs-BE}

This section is devote to the proofs of Corollaries \ref{co:static-convex} and \ref{co:static-convex-rev}. In each case, existence and uniqueness of $\mu \in \P(\R^d)$ satisfying \eqref{def:mu-fixedpoint} follows from Assumption \ref{assumption:B}, by \cite[Theorem 2.1]{carrillo2003kinetic}.
We first collect some useful and well known facts that will be used in each proof.

\subsection{The Bakry-\'Emery curvature condition} \label{se:BakryEmery}

This short section summarizes a number of classical facts about strongly log-concave probability measures; see \cite{saumard2014log} for an overview and references.
We say a function on $\R^k$  is $\kappa$-convex if its Hessian is pointwise bounded from below by $\kappa I$, in semidefinite order.
Let $\nu$ be a probability measure on $\R^k$ with strictly positive density, also denoted $\nu$, such that $-\log \nu$ is $\kappa$-convex. Then $\nu$ satisfies the log-Sobolev inequality
\begin{align*}
H(f\nu\,|\,\nu) &\le \frac{2}{\kappa}\int |\nabla \sqrt{f}|^2 \,d\nu = \frac{1}{2\kappa}\int|\nabla \log f|^2 f\,d\nu = \frac{1}{2\kappa}I(f\nu\,|\,\nu).
\end{align*}
This is a famous result of Bakry-\'Emery \cite{bakryemery}, and see \cite[Corollary 5.7.2]{bakryGentilLedoux} or \cite[Corollary 7.3]{gozlan-leonard} for English references.
It was later shown by Otto-Villani \cite{ottovillani} that this implies the quadratic transport inequality
\begin{align*}
\W_2^2(\nu,\cdot) \le \frac{2}{\kappa}H(\cdot\,|\,\nu).
\end{align*}
We also have the Poincar\'e inequality (see \cite[Section 7]{ottovillani})
\begin{align}
\Var_\nu(f) := \int (f-\langle\nu,f\rangle)^2\,d\nu &\le \frac{1}{\kappa}\int |\nabla f|^2\,d\nu. \label{eq:BE-Poincare}
\end{align}

We will also make use of the fact that strong log-concavity is preserved under conditioning and marginalization. The first of these properties is immediate, and the second is due to Brascamp-Lieb \cite[Theorem 4.3]{brascamp2002extensions} (see also \cite[Theorem 3.8]{saumard2014log}):

\begin{lemma} \label{le:logconcave} 
Suppose $\nu(x,y)$ is a strictly positive probability density function on $\R^{d+d'}$. Define the conditional and marginal densities
\begin{align*}
\nu_x(y) &= \frac{\nu(x,y)}{\int_{\R^{d'}}\nu(x,y') dy'}, \quad\qquad \widetilde\nu(x) = \int_{\R^{d'}} \nu(x,y)\,dy.
\end{align*} 
Let $\kappa \ge 0$.
If $-\log\nu$ is $\kappa$-convex on $\R^{d+d'}$, then $-\log\nu_x(\cdot)$ is $\kappa$-convex on $\R^{d'}$ for each $x \in \R^d$, and $-\log\widetilde\nu$ is $\kappa$-convex on $\R^d$.
\end{lemma}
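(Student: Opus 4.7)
The plan is to handle the two claims separately. The conditioning claim is essentially immediate: since
$$-\log\nu_x(y) = -\log\nu(x,y) + \log\int_{\R^{d'}}\nu(x,y')\,dy',$$
the last term depends only on $x$, so the Hessian in $y$ of $-\log\nu_x(\cdot)$ equals the lower-right $d'\times d'$ block of the full Hessian of $-\log\nu$ evaluated at $(x,y)$. Any principal submatrix of a symmetric matrix bounded below by $\kappa I$ is itself bounded below by $\kappa I$, which yields the claim.

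The substantive part is the marginalization statement, which I would deduce from the Pr\'ekopa-Leindler theorem on log-concavity of marginals. Setting $F := -\log\nu$, the hypothesis lets me decompose
$$F(x,y) = G(x,y) + \tfrac{\kappa}{2}\bigl(|x|^2 + |y|^2\bigr),$$
with $G$ convex on $\R^{d+d'}$. Pulling the $x$-dependent Gaussian factor outside the integral gives
$$\widetilde\nu(x) = e^{-\kappa|x|^2/2}\int_{\R^{d'}} e^{-G(x,y) - \kappa|y|^2/2}\,dy.$$
The exponent inside the integrand is convex in $(x,y)$, so Pr\'ekopa-Leindler implies that the map $x \mapsto \int_{\R^{d'}} e^{-G(x,y) - \kappa|y|^2/2}\,dy$ is log-concave on $\R^d$. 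Taking $-\log$ of the displayed formula, $-\log\widetilde\nu(x) - \tfrac{\kappa}{2}|x|^2$ is convex, which is precisely $\kappa$-convexity of $-\log\widetilde\nu$.

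The only nontrivial input is Pr\'ekopa-Leindler itself, which I would simply cite (it is also the basis of the Brascamp-Lieb reference already given in the statement); the decomposition and the extraction of the Gaussian factor are pure bookkeeping, and the $\kappa=0$ boundary case is subsumed by the same argument. Strict positivity of $\widetilde\nu$ and the regularity needed to interpret the Hessian bound pointwise follow from the assumed strict positivity of $\nu$ together with the Gaussian decay supplied by the $\tfrac{\kappa}{2}|y|^2$ term, so no further obstacle remains beyond invoking the classical log-concavity-of-marginals result.
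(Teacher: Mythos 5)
Your proof is correct. Where the paper handles this lemma purely by citation---calling the conditional claim immediate and attributing the marginal claim to Brascamp--Lieb \cite[Theorem 4.3]{brascamp2002extensions}---you supply a self-contained argument, and the route you take for the marginal part is worth comparing to the standard proof of the cited theorem.

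For the conditional claim, your observation that the $x$-dependent normalization contributes nothing under $\nabla_y$, together with the fact that a principal block of a matrix bounded below by $\kappa I$ is itself bounded below by $\kappa I$, is exactly right and is what the paper means by ``immediate.'' For the marginal claim, the usual proof of Brascamp--Lieb's Theorem 4.3 proceeds by differentiating twice to obtain
\begin{align*}
-\nabla^2_x\log\widetilde\nu(x) \;=\; \E_{\nu_x}\!\big[\nabla^2_{xx}(-\log\nu)\big] \;-\; \Cov_{\nu_x}\!\big(\nabla_x(-\log\nu)\big),
\end{align*}
and then controls the covariance term via the Brascamp--Lieb variance inequality and a Schur-complement argument. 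Your approach sidesteps all of this: you peel off the quadratic $\tfrac{\kappa}{2}(|x|^2+|y|^2)$, extract the $x$-dependent Gaussian factor from the integral, and invoke Pr\'ekopa's log-concavity-of-marginals theorem (the $\kappa=0$ case) on what remains. This is cleaner and more elementary---a single additive decomposition replaces a Hessian computation plus a variance inequality---and it correctly uses the characterization of $\kappa$-convexity that does not require pointwise second derivatives, so the regularity concerns you flag at the end are handled automatically. The trade-off is that the Brascamp--Lieb computation yields an exact expression for the marginal Hessian, which is occasionally useful elsewhere; here only the lower bound is needed, so nothing is lost. One small terminological point: what you actually invoke is Pr\'ekopa's theorem on marginals of log-concave functions, which follows from (but is not identical to) the Pr\'ekopa--Leindler inequality you name; the distinction does not affect correctness.
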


\subsection{Proofs of Corollaries \ref{co:static-convex} and \ref{co:static-convex-rev}}

We now specialize to the context of $P^n$ and $\mu$ as defined in Section \ref{se:mainresults}, under Assumption \ref{assumption:B}.
Identifying the measures $P^n$ and $\mu$ with their densities, Assumption \ref{assumption:B} implies that $-\log P^n$ and $-\log \mu$ are both $\beta \kappa$-convex. 
The latter is straightforward from the assumed convexity of $V$ and $U$, and the former requires a calculation for which we refer to \cite[Lemma 3.5]{malrieu2001logarithmic} or \cite[Proposition 3.1]{malrieu2003convergence}.
As in Section \ref{se:BakryEmery}, we deduce that $\mu$ satisfies the log-Sobolev and quadratic transport inequalities
\begin{align}
\W_2^2(\mu,\cdot) \le \frac{2}{\beta \kappa}H(\cdot \, |\, \mu) \le \frac{1}{\beta^2 \kappa^2}I(\cdot\,|\,\mu). \label{eq:BE-W2H}
\end{align}
The same inequalities hold with $P^n$ in place of $\mu$.

{\ }

\noindent\textbf{Proof of Corollary \ref{co:static-convex}.}
Recall the definition of the constant $L$ from Assumption \ref{assumption:B}. Note for any unit vector $u \in \R^d$ that the function $x \mapsto u \cdot \nabla V(x)$ is $L$-Lipschitz since $0 \le \nabla^2 V \le LI$. Hence, using Kantorovich duality, for each $x \in \R^d$ and $\nu \in \P(\R^d)$ we have
\begin{align*}
\left|\langle \nu - \mu, \nabla V(x-\cdot)\rangle\right|^2 &= \sup_{u \in \R^d, \, |u|=1} \langle \nu - \mu, u \cdot \nabla V(x-\cdot)\rangle^2 \le L^2 \W_1^2(\mu,\nu).
\end{align*}
Combining this with the inequalities \eqref{eq:BE-W2H}, we deduce that condition (2) of Theorem \ref{th:main} holds with $\transpconst = (L/\beta\kappa)^2$:
\begin{align}
\left|\langle \nu - \mu, \nabla V(x-\cdot)\rangle\right|^2 &\le L^2 \W_2^2(\mu,\nu) \le \frac{ L^2}{\beta^2 \kappa^2} I(\nu\,|\,\mu). \label{pf:BE-transp}
\end{align}
It remains finally to estimate the constant
\begin{align*}
M := \int_{(\R^d)^2} \left|  \nabla V(x_1-x_2) - \langle \mu,\nabla V(x_1-\cdot)\rangle \right|^2 \, P^n_2 (dx_1,dx_2).
\end{align*}
By Jensen's inequality and Assumption \ref{assumption:B},
\begin{align*}
M &\le \int_{\R^d \times \R^d} \int_{\R^d} \left| \nabla V(x_1-x_2) - \nabla V(x_1-x_3)\right|^2\, \mu(dx_3)\, P^n_2(dx_1,dx_2) \\
	&\le L^2 \int_{\R^d} \int_{\R^d} | x-y |^2\, \mu(dy)\, P^n_1(dx) \\
	&\le 2L^2 \left( \int_{\R^d} |x|^2\,\mu(dx) + \int_{\R^d} |x|^2\,P^n_1(dx)\right).
\end{align*}
Noting that $U$ and $V$ are even functions, we deduce that $P^n$ and $\mu$ have mean zero. The Poincar\'e inequality \eqref{eq:BE-Poincare}, applied to the coordinate functions $f(x) = x_i$ for $i=1,\ldots,d$, 
then shows that $\int_{\R^d}|x|^2\,P^n_1(dx)$ and $\int_{\R^d}|x|^2\,\mu(dx)$ are both bounded by $d/\beta\kappa$. Thus $M \le  \frac{L^2d}{\beta\kappa}$.
Theorem \ref{th:main} now applies, noting that $\beta\sqrt{\transpconst}=L/\kappa$.  \hfill \qedsymbol

{\ }

\noindent\textbf{Proof of Corollary \ref{co:static-convex-rev}.}
We check the hypotheses of Theorem \ref{th:main-rev}. Condition (1) follows exactly as in the proof of Corollary \ref{co:static-convex}, with $M \le  \frac{L^2d}{\beta\kappa}$.
To check the log-Sobolev inequality in condition (3), we use Lemma \ref{le:logconcave} to deduce that $-\log P^n_k$ is $\beta\kappa$-convex, and thus (3) holds with $\eta=1/2\beta\kappa$.
To check condition (2), first note that $-\log P^n_{k+1|k}(x_1,\ldots,x_k)(x_{k+1})$ is $\beta\kappa$-convex in $x_{k+1}$ for each $x_1,\ldots,x_k$, by Lemma \ref{le:logconcave}.
Thus, as in Section \ref{se:BakryEmery}, we deduce the quadratic transport inequality
\begin{align*}
\W_2^2(\cdot,P^n_{k+1|k}(x_1,\ldots,x_k)) \le \frac{2}{\beta\kappa}  H(\cdot\,|\,P^n_{k+1|k}(x_1,\ldots,x_k)),
\end{align*}
for $1 \le k < n$ and $x_1,\ldots,x_k \in \R^d$.
Arguing as in \eqref{pf:BE-transp}, we get
\begin{align*}
|\langle \mu - P^n_{k+1|k}(x_1,\ldots,x_k), \nabla V(x_1 - \cdot)\rangle|^2 \le  \frac{2L^2}{\beta \kappa} H(\mu\,|\,P^n_{k+1|k}(x_1,\ldots,x_k)).
\end{align*}
This shows condition (2) of Theorem \ref{th:main-rev} with $\transpconst = 2L^2/\beta \kappa$. 

We now have all of the ingredients we need to apply Theorem  \ref{th:main-rev}.
Let $\epsilon > 0$ and set
\begin{align*}
\alpha := \eta\transpconst\beta^2(1+\epsilon) = (1+\epsilon)(L/\kappa)^2.
\end{align*}
Note that $\transpconst\beta^2=2L^2\beta/\kappa = 2\beta\kappa\alpha/(1+\epsilon)$ and $2\eta M \beta^2 \le L^2d/\kappa^2=\alpha d/(1+\epsilon) \le \alpha d$.
Apply Theorem  \ref{th:main-rev} with these substitutions to complete the proof. \hfill \qedsymbol

\section{The Gaussian case} \label{se:gaussian}

This section documents Example \ref{ex:gaussian-static}, which illustrates that the $O((k/n)^2)$ estimate from Theorem \ref{th:main} cannot be improved.
Throughout the section, $J_n$ denotes the $n \times n$ matrix of all ones, and $I_n$ is the identity matrix.

Fix $a,b > 0$.
Consider the Gaussian probability measure on $\R^n$ defined by
\begin{align}
P^n(dx) &= \frac{1}{Z_n}\exp\bigg(  - \frac{a}{2}\sum_{i=1}^n x_i^2  - \frac{b}{2(n-1)}\sum_{1 \le i < j \le n}^n (x_i-x_j)^2\bigg) dx. \label{def:Pn-gaussian}
\end{align}
We will show that, if $n\to\infty$ and $k\to k^* \in \N \cup \{\infty\}$ with $k=o(n)$, then
\begin{align*}
(n/k)^2\W_2^2(P^n_k,\mu^{\otimes k}) &\to \frac{b^2}{4a^2(a+b)^3}\bigg( \frac{a^2}{k^*} +  \bigg(\bigg(1-\frac{1}{k^*}\bigg)a+b \bigg)^2\bigg) > 0.
\end{align*}
To see this, we begin by rewriting the exponent in \eqref{def:Pn-gaussian} as
\begin{align*}
- \frac12\bigg(a + \frac{bn}{n-1}\bigg) \sum_{i=1}^n x_i^2  + \frac{b}{2(n-1)}\sum_{i,j=1}^n x_ix_j = -\frac12 x^\top \Sigma_n^{-1} x,
\end{align*}
where $\Sigma_n^{-1} := \left(a + \frac{bn}{n-1}\right)I_n -\frac{b}{n-1}J_n$.
In other words, $P^n$ is the centered Gaussian measure with covariance matrix
\begin{align*}
\Sigma_n &= d_n(I_n + c_n J_n), \ \ \text{ where } \ \ d_n := \frac{1}{a + \frac{bn}{n-1}}, \ \text{ and } \ c_n :=  \frac{b}{a(n-1)} .
\end{align*}
The $k$-dimensional marginal $P^n_k$ is then a centered Gaussian with covariance matrix
\begin{align*}
\Sigma_{n,k} &:= d_n(I_k + c_n J_k). 
\end{align*}
Note since $c_n \to 0$ and $d_n\to 1/(a+b)$ that 
\begin{align*}
\lim_{n\to\infty}\Sigma_{n,k} &= \frac{1}{a+b}I_k, \qquad \forall k \in \N.
\end{align*}
This implies that $P^n_k$ converges weakly to $\mu^{\otimes k}$ as $n\to\infty$, for each $k \in\N$, where $\mu$ is defined as the 1-dimensional centered Gaussian measure with variance $1/(a+b)$.

As shown in \cite{dowson1982frechet}, the quadratic Wasserstein distance between two centered Gaussian measures $\gamma_1$ and $\gamma_2$ with \emph{commuting} covariance matrices $\Sigma_1$ and $\Sigma_2$ is precisely
\begin{align}
\W_2^2(\gamma_1,\gamma_2) &= \tr\big((\Sigma_1^{1/2} - \Sigma_2^{1/2})^2\big).  \label{eq:W2-gaussian}
\end{align}
For $k\in \N$, suppose $\Sigma_i = a_iI_k + b_iJ_k$ for some $a_i > 0$ and $b_i \in \R$. Note that $\Sigma_i$ has eigenvalues $a_i$ and $a_i+b_ik$, with respective multiplicities $k-1$ and $1$. Apply \eqref{eq:W2-gaussian} along with a simultaneous diagonalization of $\Sigma_1$ and $\Sigma_2$ to find
\begin{align}
\W_2^2(\gamma_1,\gamma_2) &= (k-1)\left(a_1^{1/2} - a_2^{1/2}\right)^2 + \left((a_1+b_1k)^{1/2} - (a_2+b_2k)^{1/2}\right)^2. \label{eq:W2-gaussian-ourcase}
\end{align}
Apply this in our context, with $a_1=d_n$, $a_2=1/(a+b)$, $b_1=d_nc_n$, and $b_2=0$, to get
\begin{align}
\begin{split}
\W_2^2(P^n_k,\mu^{\otimes k}) &=(k-1)(a+b)^{-1}(d_n^{1/2}(a+b)^{1/2} - 1)^2  \\
	&\qquad +  (a+b)^{-1}\left(d_n^{1/2}(a+b)^{1/2}(1 + kc_n)^{1/2} -  1\right)^2.
\end{split} \label{pf:gauss-id1}
\end{align}
Computing derivatives shows that 
\begin{align*}
d_n^{1/2}(a+b)^{1/2} = \sqrt{\frac{a+b}{a+b+\frac{b}{n-1}}} &= 1 - \frac{1}{2(a+b)}\frac{b}{n-1} + o(1/n).
\end{align*}
As $n\to\infty$ and $k\to k^* \in \N \cup \{\infty\}$, this implies
\begin{align}
\left(\frac{n}{k}\right)^2(k-1)(d_n^{1/2}(a+b)^{1/2}-1)^2 &\to \frac{b^2}{4(a+b)^2k^*}. \label{pf:gauss-id2}
\end{align}
Moreover, noting that $(1+x)^{1/2} = 1 + x/2 + o(x)$ as $x\to 0$ and  $kc_n=O(k/n)$, we have
\begin{align*}
d_n^{1/2}(a+b)^{1/2}(1 + kc_n)^{1/2} &= 1 - \frac{b}{2(a+b)}\frac{1}{n-1} + \frac12 k c_n + o(k/n),
\end{align*}
which (using $nc_n \to b/a$) leads to
\begin{align}
\frac{n}{k}\left(d_n^{1/2}(a+b)^{1/2}(1 + kc_n)^{1/2} - 1\right) &\to \frac{b}{2a} - \frac{b}{2k^*(a+b)}. \label{pf:gauss-id3}
\end{align}
Finally, plug \eqref{pf:gauss-id2} and \eqref{pf:gauss-id3} into \eqref{pf:gauss-id1} to conclude that
\begin{align*}
\left(\frac{n}{k}\right)^2\W_2^2(P^n_k,\mu^{\otimes k}) &\to \frac{b^2}{4(a+b)^3 k^*} + \frac{1}{a+b}\bigg( \frac{b}{2a} - \frac{b}{2k^*(a+b)}\bigg)^2.
\end{align*}

\bibliographystyle{amsplain}
\bibliography{biblio}

\providecommand{\bysame}{\leavevmode\hbox to3em{\hrulefill}\thinspace}
\providecommand{\MR}{\relax\ifhmode\unskip\space\fi MR }
\providecommand{\MRhref}[2]{%
  \href{http://www.ams.org/mathscinet-getitem?mr=#1}{#2}
}
\providecommand{\href}[2]{#2}
\begin{thebibliography}{10}

\bibitem{bakryemery}
D.~Bakry and M.~{\'E}mery, \emph{Diffusions hypercontractives}, S{\'e}minaire
  de Probabilit{\'e}s XIX 1983/84, Springer, 1985, pp.~177--206.

\bibitem{bakryGentilLedoux}
D.~Bakry, I.~Gentil, and M.~Ledoux, \emph{Analysis and geometry of {M}arkov
  diffusion operators}, vol. 348, Springer Science \& Business Media, 2013.

\bibitem{benarous-zeitouni}
G.~{Ben Arous} and O.~Zeitouni, \emph{Increasing propagation of chaos for mean
  field models}, Annales de l'Institut Henri Poincare (B) Probability and
  Statistics, vol.~35, Elsevier, 1999, pp.~85--102.

\bibitem{benachour1998-I}
S.~Benachour, B.~Roynette, D.~Talay, and P.~Vallois, \emph{Nonlinear
  self-stabilizing processes--{I} existence, invariant probability, propagation
  of chaos}, Stochastic processes and their applications \textbf{75} (1998),
  no.~2, 173--201.

\bibitem{brascamp2002extensions}
H.J. Brascamp and E.H. Lieb, \emph{On extensions of the {B}runn-{M}inkowski and
  {P}r{\'e}kopa-{L}eindler theorems, including inequalities for log concave
  functions, and with an application to the diffusion equation}, Inequalities,
  Springer, 2002, pp.~441--464.

\bibitem{bubeck2018entropic}
S.~Bubeck and S.~Ganguly, \emph{Entropic {CLT} and phase transition in
  high-dimensional {W}ishart matrices}, International Mathematics Research
  Notices \textbf{2018} (2018), no.~2, 588--606.

\bibitem{budhiraja-dupuis}
A.~Budhiraja and P.~Dupuis, \emph{Analysis and approximation of rare
  events:{R}epresentations and weak convergence methods}, vol.~94, Springer,
  2019.

\bibitem{carlen2008entropy}
E.A. Carlen, M.C. Carvalho, J.~{Le Roux}, M.~Loss, and C.~Villani,
  \emph{Entropy and chaos in the {K}ac model}, arXiv preprint arXiv:0808.3192
  (2008).

\bibitem{carrillo2003kinetic}
J.A. Carrillo, R.J. McCann, and C.~Villani, \emph{Kinetic equilibration rates
  for granular media and related equations: entropy dissipation and mass
  transportation estimates}, Revista Matematica Iberoamericana \textbf{19}
  (2003), no.~3, 971--1018.

\bibitem{cattiaux2008probabilistic}
P.~Cattiaux, A.~Guillin, and F.~Malrieu, \emph{Probabilistic approach for
  granular media equations in the non-uniformly convex case}, Probability
  theory and related fields \textbf{140} (2008), no.~1, 19--40.

\bibitem{chafai2014first}
D.~Chafa{\"\i}, N.~Gozlan, and P.-A. Zitt, \emph{First-order global asymptotics
  for confined particles with singular pair repulsion}, Annals of applied
  Probability \textbf{24} (2014), no.~6, 2371--2413.

\bibitem{dawson1983critical}
D.A. Dawson, \emph{Critical dynamics and fluctuations for a mean-field model of
  cooperative behavior}, Journal of Statistical Physics \textbf{31} (1983),
  no.~1, 29--85.

\bibitem{del2001genealogies}
P.~{Del Moral} and L.~Miclo, \emph{Genealogies and increasing propagation of
  chaos for {F}eynman-{K}ac and genetic models}, The Annals of Applied
  Probability \textbf{11} (2001), no.~4, 1166--1198.

\bibitem{diaconis1992finite}
P.~Diaconis, M.L. Eaton, and S.L. Lauritzen, \emph{Finite de {F}inetti theorems
  in linear models and multivariate analysis}, Scandinavian Journal of
  Statistics (1992), 289--315.

\bibitem{diaconis1980finite}
P.~Diaconis and D.~Freedman, \emph{Finite exchangeable sequences}, The Annals
  of Probability (1980), 745--764.

\bibitem{diaconis1987dozen}
\bysame, \emph{A dozen de {F}inetti-style results in search of a theory},
  Annales de l'IHP Probabilit{\'e}s et statistiques, vol.~23, 1987,
  pp.~397--423.

\bibitem{diaconis1988conditional}
\bysame, \emph{Conditional limit theorems for exponential families and finite
  versions of de {F}inetti's theorem}, Journal of Theoretical Probability
  \textbf{1} (1988), no.~4, 381--410.

\bibitem{dowson1982frechet}
D.C. Dowson and B.V. Landau, \emph{The {F}r{\'e}chet distance between
  multivariate normal distributions}, Journal of multivariate analysis
  \textbf{12} (1982), no.~3, 450--455.

\bibitem{dupuis2020large}
P.~Dupuis, V.~Laschos, and K.~Ramanan, \emph{Large deviations for
  configurations generated by {G}ibbs distributions with energy functionals
  consisting of singular interaction and weakly confining potentials},
  Electronic Journal of Probability \textbf{25} (2020).

\bibitem{durmus2020elementary}
A.~Durmus, A.~Eberle, A.~Guillin, and R.~Zimmer, \emph{An elementary approach
  to uniform in time propagation of chaos}, Proceedings of the American
  Mathematical Society \textbf{148} (2020), no.~12, 5387--5398.

\bibitem{fournier2015rate}
N.~Fournier and A.~Guillin, \emph{On the rate of convergence in {W}asserstein
  distance of the empirical measure}, Probability Theory and Related Fields
  \textbf{162} (2015), no.~3-4, 707--738.

\bibitem{garcia2019large}
D.~Garc{\'\i}a-Zelada, \emph{A large deviation principle for empirical measures
  on {P}olish spaces: {A}pplication to singular {G}ibbs measures on manifolds},
  Annales de l'Institut Henri Poincar{\'e}, Probabilit{\'e}s et Statistiques,
  vol.~55, Institut Henri Poincar{\'e}, 2019, pp.~1377--1401.

\bibitem{gozlan-leonard}
N.~Gozlan and C.~L{\'e}onard, \emph{Transport inequalities. {A} survey}, arXiv
  preprint arXiv:1003.3852 (2010).

\bibitem{guillin2009transportation}
A.~Guillin, C.~L{\'e}onard, L.~Wu, and N.~Yao, \emph{Transportation-information
  inequalities for {M}arkov processes}, Probability theory and related fields
  \textbf{144} (2009), no.~3-4, 669--695.

\bibitem{guillin2019uniform}
A.~Guillin, W.~Liu, L.~Wu, and C.~Zhang, \emph{Uniform {P}oincar{\'e} and
  logarithmic {S}obolev inequalities for mean field particles systems}, arXiv
  preprint arXiv:1909.07051 (2019).

\bibitem{guionnet2003lectures}
A.~Guionnet and B.~Zegarlinksi, \emph{Lectures on logarithmic {S}obolev
  inequalities}, S{\'e}minaire de probabilit{\'e}s XXXVI, Springer, 2003,
  pp.~1--134.

\bibitem{hauray2014kac}
M.~Hauray and S.~Mischler, \emph{On {K}ac's chaos and related problems},
  Journal of Functional Analysis \textbf{266} (2014), no.~10, 6055--6157.

\bibitem{herrmann2010non}
S.~Herrmann and J.~Tugaut, \emph{Non-uniqueness of stationary measures for
  self-stabilizing processes}, Stochastic Processes and their Applications
  \textbf{120} (2010), no.~7, 1215--1246.

\bibitem{jabin-wang-bounded}
P.-E. Jabin and Z.~Wang, \emph{Mean field limit and propagation of chaos for
  {V}lasov systems with bounded forces}, Journal of Functional Analysis
  \textbf{271} (2016), no.~12, 3588--3627.

\bibitem{jabin-wang-W1inf}
\bysame, \emph{Quantitative estimates of propagation of chaos for stochastic
  systems with ${W}^{-1,\infty}$ kernels}, Inventiones mathematicae
  \textbf{214} (2018), no.~1, 523--591.

\bibitem{jabir2019rate}
J.-F. Jabir, \emph{Rate of propagation of chaos for diffusive stochastic
  particle systems via {G}irsanov transformation}, arXiv preprint
  arXiv:1907.09096 (2019).

\bibitem{johnston2020maxwell}
S.G.G. Johnston and J.~Prochno, \emph{A {M}axwell principle for generalized
  {O}rlicz balls}, arXiv preprint arXiv:2012.11568 (2020).

\bibitem{kac1956foundations}
M.~Kac, \emph{Foundations of kinetic theory}, Proceedings of The third Berkeley
  symposium on mathematical statistics and probability, vol.~3, University of
  California Press Berkeley and Los Angeles, California, 1956, pp.~171--197.

\bibitem{lacker-dynamic}
D.~Lacker, \emph{Hierarchies, entropy, and quantitative propagation of chaos
  for mean field diffusions}, Preprint (2021).

\bibitem{liu2020large}
W.~Liu and L.~Wu, \emph{Large deviations for empirical measures of mean-field
  {G}ibbs measures}, Stochastic Processes and their Applications \textbf{130}
  (2020), no.~2, 503--520.

\bibitem{liu2017new}
Y.~Liu, \emph{A new characterization of quadratic transportation-information
  inequalities}, Probability Theory and Related Fields \textbf{168} (2017),
  no.~3, 675--689.

\bibitem{malrieu2001logarithmic}
F.~Malrieu, \emph{Logarithmic {S}obolev inequalities for some nonlinear
  {PDE}'s}, Stochastic processes and their applications \textbf{95} (2001),
  no.~1, 109--132.

\bibitem{malrieu2003convergence}
\bysame, \emph{Convergence to equilibrium for granular media equations and
  their {E}uler schemes}, The Annals of Applied Probability \textbf{13} (2003),
  no.~2, 540--560.

\bibitem{mogul1991finetti}
A.A. Mogul'skii, \emph{de {F}inetti-type results for $\ell^p$}, Sibirskii
  Matematicheskii Zhurnal \textbf{32} (1991), no.~4, 88--95.

\bibitem{naor2003projecting}
A.~Naor and D.~Romik, \emph{Projecting the surface measure of the sphere of
  $\ell_p^n$}, Annales de l'IHP Probabilit{\'e}s et statistiques, vol.~39,
  2003, pp.~241--261.

\bibitem{ottovillani}
F.~Otto and C.~Villani, \emph{Generalization of an inequality by {T}alagrand
  and links with the logarithmic {S}obolev inequality}, Journal of Functional
  Analysis \textbf{173} (2000), no.~2, 361--400.

\bibitem{rachev1991approximate}
S.T. Rachev and L.~Ruschendorf, \emph{Approximate independence of distributions
  on spheres and their stability properties}, The Annals of Probability
  \textbf{19} (1991), no.~3, 1311--1337.

\bibitem{saumard2014log}
A.~Saumard and J.A. Wellner, \emph{Log-concavity and strong log-concavity: a
  review}, Statistics surveys \textbf{8} (2014), 45.

\end{thebibliography}

\end{document}